\documentclass[12pt]{amsart}
\usepackage{amsthm,amsfonts,amsmath, amscd, amssymb, hyperref,amssymb}
\usepackage{fullpage}
\usepackage[pdftex]{graphicx}
\usepackage{enumerate}
\usepackage{cite}
\usepackage[usenames,dvipsnames]{color}
\usepackage{subcaption}
\usepackage{graphicx, upgreek}
\usepackage[margin=1in]{geometry}

\makeatletter
\@namedef{subjclassname@2020}{%
  \textup{2020} Mathematics Subject Classification}
\makeatother

\bibliographystyle{elsarticle-num}


\usepackage{datetime,mathtools}
\usepackage{xcolor}
\usepackage{hyperref}
\definecolor{darkgreen}{rgb}{0,0.4,0}
\definecolor{BrickRed}{rgb}{0.65,0.08,0}
\hypersetup{colorlinks=true,linkcolor=blue,citecolor=red,filecolor=BrickRed,urlcolor=darkgreen}

\mathtoolsset{showonlyrefs} 

\newcount\m \newcount\n
\def\hours{\n=\time \divide\n 60
	\m=-\n \multiply\m 60 \advance\m \time
	\twodigits\n:\twodigits\m}
\def\twodigits#1{\ifnum #1<10 0\fi \number#1}
\date{
	\today \quad
	\\
	\textup{2020} Mathematics Subject Classification: \href{https://zbmath.org/classification/?q=cc\%3A11B25}{\path{11B25}}; \href{https://zbmath.org/classification/?q=cc\%3A11N37}{\path{11N37}}; \href{https://zbmath.org/classification/?q=cc\%3A11M50}{\path{11M50}}.
	\\
	Key words and phrases: Divisor sums; variance in arithmetic progressions; summation formulae
}

\numberwithin{equation}{section}

\title{Variance of the $k$-fold divisor function in arithmetic progressions for individual modulus}

\author[D. T. Nguyen]{David T. Nguyen}
\address{American Institute of Mathematics, 600 E. Brokaw Rd., San Jose, CA 95112, USA.}
\email{dtn@aimath.org}

\address{Current Address: Department of Mathematics and Statistics, Queen's University, Jeffery Hall, 48 University Ave, Kingston, Ontario, K7L-3N6, Canada}
\email{d.nguyen@queensu.ca}

\newtheorem{thm}{Theorem}
\newtheorem{cor}{Corollary}
\newtheorem{remark}{Remark}
\newtheorem{conjecture}{Conjecture}
\newtheorem{lem}{Lemma}

\newtheorem{conjecturex}{Conjecture}
\newtheorem{thmx}{Theorem}



\theoremstyle{definition}



\numberwithin{equation}{section}


\frenchspacing

\textwidth=13.5cm
\textheight=23cm
\parindent=16pt
\oddsidemargin=-0.5cm
\evensidemargin=-0.5cm
\topmargin=-0.5cm

\begin{document}

\maketitle

\begin{abstract}
	In this paper, we confirm a smoothed version of a recent conjecture on the variance of the $k$-fold divisor function in arithmetic progressions to individual composite moduli, in a restricted range. In contrast to a previous result of Rodgers and Soundararajan \cite{RodgersSoundararajan2018}, we do not require averaging over the moduli. Our proof adapts a technique of S. Lester \cite{Lester2016} who treated in the same range the variance of the $k$-fold divisor function in the short intervals setting, and is based on a smoothed Vorono\"i summation formula but twisted by multiplicative characters. The use of Dirichlet characters allows us to extend to a wider range from previous result of Kowalski and Ricotta \cite{KowalskiRicotta2014} who used additive characters. Smoothing also permits us to treat all $k$ unconditionally. This result is closely related to moments of Dirichlet $L$-functions.
\end{abstract}

\tableofcontents

\section{Introduction}

\subsection{Background of the problem}

Barban-Davenport-Halberstam-type inequalities involve upper bounding quantities of the shape
\begin{equation} \label{eq:BDH}
	\sum_{d \le D}\
	\sum_{a \pmod d}
	|\Delta(f;X, d, a)|^2,
\end{equation}
where $|\Delta(f;X, d, a)|$ is being squared, with $\Delta(f;X, d, a)$ a certain error term depending on the function $f$, parameters $X,d$, and $a$ coprime to $d$. These types of inequalities originate from the works of Barban \cite{Barban1963, Barban1964} (1963, 1964), Davenport-Halberstam \cite{DavenportHalberstam1966} (1966), and they yield a wilder range for $D$ in terms of $X$ as compared to Bombieri-Vinogradov-type inequalities which bound expressions roughly of the form
	\begin{equation} \label{eq:BV}
		\sum_{d \le D}\
		\max_{a \pmod d}
		|\Delta(f;X, d, a)|.
	\end{equation}
Non-trivial bounds for \eqref{eq:BDH} have many applications in number theory--we give two recent instances below.

1. A version of the Barban-Davenport-Halberstam-type inequality, among with other novel ideas, with the function $f$ replaced by related convolutions over primes, were skillfully used by Zhang \cite[Lemma 10]{Zhang2014} (2014) in his spectacular proof that there are bounded gaps between primes.

2. In their work in 2017, Heath-Brown and Li \cite{HeathBrownLi2017} proved a version of Barban-Davenport-Halberstam inequality in their Corollary 2 as an ingredient to show that the sparse sequence $a^2 + p^4$, where $a$ is a natural number and $p$ is a prime, contains infinitely many primes.

In this paper, we study the asymptotic of a related quantity to \eqref{eq:BDH}. Let $n\ge 1$ and $k\ge 1$ be integers. Let $\tau_k(n)$ denote the {$k$-fold divisor function}
\begin{equation}
	\tau_k(n)=
	\sum_{n_1 n_2 \cdots n_k=n}1,
\end{equation}
where the sum runs over ordered $k$-tuples $(n_1,n_2,\dots,n_k)$ of positive integers for which $n_1 n_2 \cdots n_k=n$, so its Dirichlet generating function is
\begin{equation}
	\zeta(s)^k = \sum_{n=1}^\infty \frac{\tau_k(n)}{n^{s}},\quad
	(\Re s >1).
\end{equation}
Explicitly, if $n= p_1^{\alpha_1} \cdots p_r^{\alpha_r}$ is the prime factorization of $n$, then
\begin{equation}
	\tau_k(n)
	=
	\binom{k+ \alpha_1 - 1}{k-1}
	\cdots
	\binom{k+ \alpha_r - 1}{k-1};
\end{equation}
(see Lemma \ref{lemma:tau}). There is a precise prediction on the asymptotic of the variance
\begin{equation}
	\sum_{\substack{a=1\\ (a,d)=1}}^d
	\left|\sum_{\substack{1\le n\le X\\ n\equiv a (\textrm{mod } d)}} \tau_k(n)
	- \frac{1}{\varphi(d)}
	\sum_{\substack{1\le n\le X\\ (n,d)=1}} \tau_k(n)
	\right|^2,
\end{equation}
(c.f. \eqref{eq:BDH}), as $d,X \to \infty$ at a certain rate, which is the main object of study in this paper. We state this conjecture (in our notation) below.

\begin{conjecturex}[Keating--Rodgers--Roditty-Gershon--Rudnick-Soundararajan]
	\label{conj:3}
	Fix $k\ge 2$. For $X,d\to \infty$ such that $\log X/\log d \to c \in (0,k)$, we have
	\begin{equation} \label{eq:2238}
			\sum_{\substack{a=1\\ (a,d)=1}}^d
			\left|\sum_{\substack{1\le n\le X\\ n\equiv a (\textrm{mod } d)}} \tau_k(n)
			- \frac{1}{\varphi(d)}
			\sum_{\substack{1\le n\le X\\ (n,d)=1}} \tau_k(n)
			\right|^2
			\sim a_k(d) \gamma_k(c) X (\log d)^{k^2-1},
		\end{equation}
	where $a_k(d)$ is the arithmetic constant
	\begin{equation} \label{eq:akd1}
			a_k(d) = \lim_{s\to 1^+}
			(s-1)^{k^2}
			\sum_{\substack{n=1\\ (n,d)=1}}^\infty
			\frac{\tau_k(n)^2}{n^s},
		\end{equation}	
	 and $\gamma_k(c)$ is a piecewise polynomial of degree $k^2 - 1$ defined by
	 \begin{equation} \label{eq:gammakcomplicatedexpression}
		 	\gamma_k(c) = \frac{1}{k! G(k+1)^2}
		 	\int_{[0,1]^k}
		 	\delta_c (w_1 + \cdots w_k)
		 	\Delta(w)^2 d^kw,
		 \end{equation}
	 where $\delta_c(x) = \delta(x-c)$ is a Dirac delta function centered at $c$, $\Delta(w) = \prod_{i<j} (w_i - w_j)$ is a Vandermonde determinant, and $G$ is the Barnes $G$-function, so that in particular $G(k + 1) =
	 (k - 1)! (k - 2)! \cdots 1!$.
\end{conjecturex}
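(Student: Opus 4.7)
The plan is to reduce the variance to a mean square of $\tau_k$-twisted Dirichlet character sums, apply a $\tau_k$--Voronoi transform (equivalently, the approximate functional equation for $L(s,\chi)^k$) to exchange the length $X$ for a dual length depending on the conductor, and then extract both the main term and its arithmetic constant from the diagonal contribution, while showing that off-diagonal contributions are of strictly lower order.

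First I would invoke orthogonality of Dirichlet characters modulo $d$ to rewrite the left-hand side of \eqref{eq:2238} as
\begin{equation}
	\frac{1}{\varphi(d)} \sum_{\substack{\chi\,(\bmod\, d)\\ \chi\ne\chi_0}} \bigg|\sum_{\substack{n\le X\\ (n,d)=1}} \tau_k(n)\chi(n)\bigg|^2,
\end{equation}
and then decompose each induced $\chi$ into its primitive inducer $\chi^*$ of some conductor $q\mid d$, absorbing the prime factors of $d/q$ into local Euler factors of the generating series. This replaces the original problem by a weighted sum over $q\mid d$ of mean squares of $\sum_{n\le X}\tau_k(n)\chi^*(n)$ averaged over primitive $\chi^*$ modulo $q$, which is the natural object attached to the $L$-function $L(s,\chi^*)^k$.

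Next I would represent each inner sum via Perron's formula as a contour integral of $L(s,\chi^*)^k X^s/s$, shift the contour past the critical line, and apply the functional equation of $L(s,\chi^*)^k$ -- which has degree $k$ and analytic conductor $q^k$ -- to obtain a dual sum whose effective length is controlled by $q^k/X^{k-1}$. Squaring the result, averaging over primitive $\chi^*\pmod q$ by orthogonality, and retaining only the diagonal $m=n$ contribution produces a weighted sum of $\sum_n \tau_k(n)^2$ against a Mellin--Barnes kernel in $k$ shift parameters that encodes both the sharp cutoff $n\le X$ and the dual lengths. The classical fact that $\sum \tau_k(n)^2 n^{-s}$ has a pole of order $k^2$ at $s=1$ with leading residue $a_k(d)$ produces the factor $a_k(d) X(\log d)^{k^2-1}$, and the remaining $k$-fold contour integral over the shift parameters collapses, via the Keating--Snaith CUE identity, precisely to the piecewise polynomial $\gamma_k(c)$ of \eqref{eq:gammakcomplicatedexpression}; the Vandermonde $\Delta(w)^2$ and the Barnes $G$-function normalization arise from the combinatorial evaluation of this integral.

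The hard part will be the off-diagonal contribution, which takes the form of a shifted convolution sum $\sum_{m\equiv n\,(d),\, m\ne n}\tau_k(m)\tau_k(n)$ with shifts of size up to $X/d$. For $k=2$ this is tractable by the Kuznetsov formula or Heath-Brown's $\delta$-method, but for $k\ge 3$ and $c$ small -- so that the dual sums are long -- unconditional power-saving bounds are essentially equivalent to the $2k$-th moment conjecture for $L(\tfrac12,\chi)$ and lie beyond current technology. A secondary but still serious obstacle is the sharp cutoff $n\le X$ itself: via Perron it contributes boundary terms which must be controlled uniformly in $\chi^*$ across the entire range $c\in(0,k)$, and it is precisely this step that forces the present paper to work with a smooth weight in place of $\mathbf{1}_{n\le X}$. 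I would attempt the off-diagonal by combining the multiplicative large sieve, Heath-Brown's $\delta$-symbol decomposition, and Weil-type bounds for Kloosterman sums to expose hidden cancellation, but I would expect the method to fall short in the regime $c\to 0$ with $k\ge 4$, where the conjecture itself remains open.
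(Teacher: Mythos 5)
This statement is a \emph{conjecture}, not a theorem: the paper does not prove it, and neither does the referenced literature. What the paper establishes (Theorem~\ref{theorem:1} and Corollary~\ref{cor:2}) is a \emph{smoothed} version of~\eqref{eq:2238}, with the sharp cutoff $\mathbf{1}_{n\le X}$ replaced by a compactly supported weight $w(n/X)$, and only in the restricted range $c\in(k-1,k)$. Since there is no ``paper's own proof'' of~\eqref{eq:2238}, your proposal is being measured against the paper's proof of that weaker statement.

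At that level your outline is broadly aligned with the paper's strategy (orthogonality, reduction to primitive characters, functional equation, diagonal $\Rightarrow$ $a_k(d)\gamma_k(c)$, off-diagonal as the obstruction), and you correctly anticipate that a smooth weight is needed to tame the contour shifts. But two points deserve correction. First, the dual length from the functional equation of $L(s,\chi^*)^k$ with conductor $q^k$ is $Y\asymp q^k/X$, not $q^k/X^{k-1}$; this is not a cosmetic slip, because the entire reason the paper succeeds for $c\in(k-1,k)$ is that with $X=d^c$ and $q=d$ one gets $Y=d^{k-c}<d$, so after truncation at $N=d^{k-c+\delta/2}$ the off-diagonal congruences $m\equiv nb_r\pmod{\mathfrak d}$ with $m,n\le N$ are too sparse to matter and a trivial estimate (see~\eqref{eq:off-diag}) suffices -- there is no need for a $\delta$-method, shifted convolutions, or Kloosterman-sum bounds in this range. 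Second, in this range the constant $\gamma_k(c)$ is just $(k-c)^{k^2-1}/(k^2-1)!$ (see~\eqref{eq:gammakcsimple}), extracted directly from the order-$k^2$ pole of $\sum_{(n,r)=1}\tau_k(n)^2 n^{-s}$ together with the geometric series from the $\log$ term; invoking the full Keating--Snaith/CUE $k$-fold contour integral is both unnecessary here and not something one can rigorously carry out for the full range $c\in(0,k)$. Your concluding assessment is accurate: the off-diagonal for small $c$ is currently intractable, so the general statement~\eqref{eq:2238} remains a conjecture, and no proof of it exists to be compared against.
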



This conjecture originates from the work of Keating, Rodgers, Roditty-Gershon, and Rudnick (or, KRRR, for short) in 2018 \cite[Conjecture 3.3]{KR^32018} for prime moduli, with general form for composite moduli put forward by Rodgers and Soundararajan in \cite[Conjecture 1]{RodgersSoundararajan2018} later that year. We give two reasons why this conjecture is relevant.

Firstly, Conjecture \ref{conj:3} is closely related to the problem of moments of Dirichlet $L$-functions \cite{ConreyGonnek2002} and correlations of divisor sums \cite{ConreyKeatingI,ConreyKeatingII,ConreyKeatingIII,ConreyKeatingIV,ConreyKeatingV}. For instance, let $g_k$ denote the geometric factor in the leading term asymptotic of the 2$k$-th moment of the Riemann zeta function on the critical line:
\begin{equation} \label{eq:moment}
	\int_0^T \left| \zeta\left(\frac{1}{2} + it\right) \right|^{2k} dt
	\sim a_k g_k T\frac{(\log T)^{k^2}}{k^2!}, (T \to \infty),
\end{equation}
where
\begin{equation}
	a_k = 
	\prod_p 
	\left(1-\frac{1}{p}\right)^{(k-1)^2}
	\left(
	1 + \frac{\binom{k-1}{1}^2}{p}
	+ \frac{\binom{k-1}{2}^2}{p^2}
	+ \cdots
	\right).
\end{equation}
The asymptotic \eqref{eq:moment} is currently only known, in the case $k$ is a natural number, for $k=1$ and $2$. Then, the piece-wise polynomial $\gamma_k(c)$ in the asymptotic \eqref{eq:2238} is related to the constant $g_k$ in the moment conjecture \eqref{eq:moment} by the conjectural relation
\begin{equation} \label{eq:243}
	k^2! \int_0^k \gamma_k(c) dc
	= g_k,\quad
	(k \ge 1).
\end{equation}
The connection \eqref{eq:243} thus provides an alternative route towards \eqref{eq:moment} via \eqref{eq:2238} for $k\ge 3$. Explicit expressions for $\gamma_k(c)$ for $1\le k \le 6$ are given in \cite[Tables I and II]{Basoretal}. For example, when $k=3$,
\begin{equation}
	\gamma_3(c)
	= \begin{cases}
		\frac{1}{8!} c^8, & \text{ if } 0 \le c < 1,
		\\
		\frac{1}{8!} \left(
			-2c^8 + 24c^7 - 252c^6 + 1512 c^5 - 4830c^4
			\right.
			\\
			\left.
			\quad \quad
			+ 8568c^3 - 8484c^2 + 4392c - 927
		\right), & \text{ if } 1 \le c < 2,
		\\
		\frac{1}{8!} (3-c)^8, & \text{ if } 2 \le c \le 3,
	\end{cases}
\end{equation}
and a simple integration yields
\begin{equation}
	9! \int_0^3 \gamma_3(c) dc
	= 42,
\end{equation}
which is equal to the conjectural value
\begin{equation}
	g_k = \frac{(k^2)!}{ 1 \cdot 2^2 \cdot k^k \cdot (k+1)^{k-1} \cdot (2k+1)}
\end{equation}
when $k=3$. The plot of the function $9! \gamma_3(c)$ is shown in Figure \ref{fig:gamma3}.

\begin{figure}
	\caption{Plot of the piece-wise polynomial $9! \gamma_3(c)$ versus $c$.}
	\label{fig:gamma3}
	\includegraphics{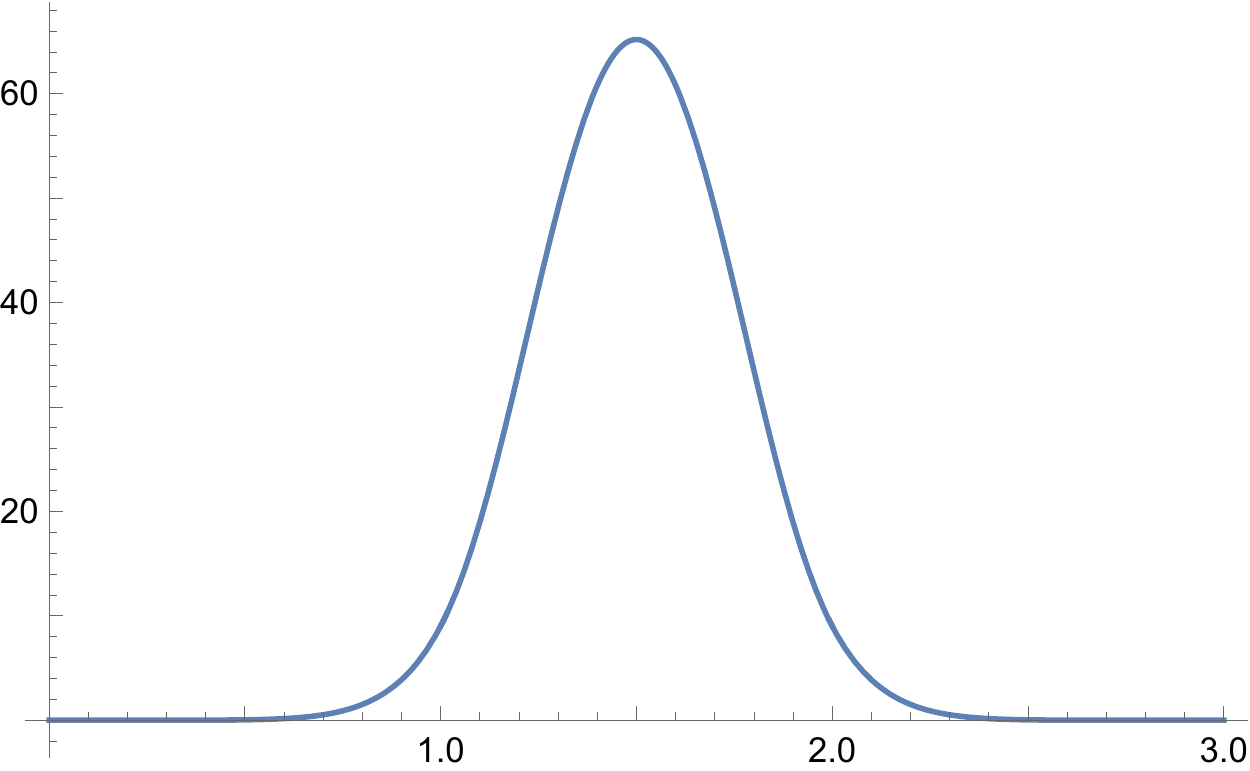}
\end{figure}

Secondly, Conjecture \ref{conj:3} suggests that, on average, the $k$-fold divisor function has a ``level of distribution" up to level $1-\epsilon$ in arithmetic progressions for any $k\ge 2$, that is,
\begin{equation} \label{eq:level}
	\Delta(\tau_k; X, d,a) := 
	\sum_{\substack{1\le n\le X\\ n\equiv a (\textrm{mod } d)}} \tau_k(n)
	- \frac{1}{\varphi(d)}
	\sum_{\substack{1\le n\le X\\ (n,d)=1}} \tau_k(n)
	\ll \frac{X^{1-\epsilon}}{\varphi(d)},\
	((a,d) = 1),
\end{equation}
holds for all $d \ll X^{1-\epsilon}$. Indeed, if \eqref{eq:2238} held for all $c \in (0,k)$, then, we have roughly that
\begin{equation}
	d |\Delta(\tau_k; X, d,a)|^2 = X^{1+\epsilon},
\end{equation}
so that $|\Delta(\tau_k; X, d,a)|$ is roughly of size, with $d = X^{1/c}$,
\begin{equation}
	\sqrt{\frac{X^{1+\epsilon}}{d}}
	= X^{\frac{c-1}{2c} + \frac{1}{2\epsilon}}.
\end{equation}
Thus, the bound \eqref{eq:level} holds if the above
\begin{equation}
	X^{\frac{c-1}{2c} + \frac{1}{2\epsilon}}
	\ll X^{\frac{c-1}{c} - \epsilon},
\end{equation}
or
\begin{equation}
	\frac{1}{c} \le 1-\frac{3 \epsilon}{2},
\end{equation}
and this happens when
\begin{equation}
	d \ll X^{1-\frac{3 \epsilon}{2}}.
\end{equation}
Known levels of distribution for $\tau_k$ are summarized in Table 1 in \cite[p. 33]{NguyenDivisorFunction}. We next briefly survey what is known about Conjecture \ref{conj:3}.

In \cite{RodgersSoundararajan2018}, Rodgers and Soundararajan confirmed an averaged version of Conjecture \ref{conj:3} in a restricted range of $c$, and over smooth cutoffs. Harper and Soundararajan in \cite{HarperSound} obtained a lower bound of the right order of magnitude for the average of this variance \eqref{eq:2238}. Nguyen \cite[Theorem 3]{NguyenDivisorFunction}, by using the large sieve inequality, obtained a matching upper bound of the same order of magnitude for this  variance when averaged over the moduli $d$.

More is known about the variance \eqref{eq:2238} when the modulus $d$ is prime. For instance, when $k\ge 3$ and $d$ is prime, Kowalski and Ricotta in 2014 computed, as one of many results in \cite[Theorem C, pg. 1235]{KowalskiRicotta2014} a smoothed version of the variance \eqref{eq:2238}, for
\begin{equation} \label{eq:205}
	c \in \left(k-\frac{1}{2} , k \right), \quad (k\ge 3).
\end{equation}
Their proof uses deep equidistribution result of
products of hyper-Kloosterman sums and is based on a Vorono\"i summation formula for $GL(N)$ twisted by additive characters. This theme of using additive characters to study moments of arithmetic sequences in progressions has also been pursued by de la Bret\`eche and Fiorilli in \cite{delaBretecheFiorilli2000}.

In KRRR (\cite{KR^32018}), an analogous conjecture to the variance of divisor sums in arithmetic progressions was also made for divisor sums in short intervals. Lester \cite{Lester2016} in 2016 succeeded in evaluating this variance in the short intervals setting, for $c\in (k-1,k)$, unconditionally for $k=3$ and conditionally on the Lindel\"of hypothesis for all $k>3$. His proof combines two previous methods of Selberg and Jutila, and is based in part on a summation formula for coefficients of $L$-functions developed in \cite{FriedlanderIwaniec2005} by Friedlander and Iwaniec in 2005. We now state our main result.

\subsection{Statements of results}

Let $g(n)$ be an arithmetic function, and $w(y)$ a smooth weight function compactly supported on the positive real numbers. For $(a,d)=1$, define
\begin{equation}
	\Delta_w(g;X,d,a)
	:=
	\sum_{\substack{n\equiv a (d)}}
	g(n) w\left(\frac{n}{X}\right)
	- \frac{1}{\varphi(d)}
	\sum_{\substack{(n,d)=1}}
	g(n) w\left(\frac{n}{X}\right).
\end{equation}
In this paper, we confirm a smoothed version of the variance \eqref{eq:2238} of $\tau_k(n)$ in arithmetic progressions to any composite modulus $d$, for $c\in (k-1,k)$, for any fixed $k \ge 3$. Our procedure follows, with some modifications, that of Lester in his work \cite{Lester2016}. This is

\begin{cor}[Main result] \label{cor:2}
	Fix $k\ge 3$. Let $w(y)$ be a smooth function with support in $[1,2]$ such that
	\begin{equation} \label{eq:weightwNormalization}
		\int w(y)^2 dy =1.
	\end{equation}
	Then, uniformly in $c$ for all
	\begin{equation} \label{eq:star}
		c \in (k-1,k),
	\end{equation}
	we have the asymptotic
	\begin{align} \label{eq:cor:2}
		\sum_{\substack{1\le a\le d\\ (a,d)=1}}
		\left|
		\Delta_w(\tau_k;X,d,a)
		\right|^2
		\sim
		a_k(d) \gamma_k(c) X (\log d)^{k^2-1},
	\end{align}
	as $d\to \infty$, where $a_k(d)$ is given as in \eqref{eq:akd1},
	\begin{equation} \label{eq:X}
		X = d^c,
	\end{equation}
	and
	\begin{equation} \label{eq:gammakcsimple}
		\gamma_k(c) = \frac{1}{(k^2-1)!} (k-c)^{k^2-1},\ (k-1 \le c < k).
	\end{equation}
\end{cor}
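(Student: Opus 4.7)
The plan is to follow the strategy of Lester \cite{Lester2016} from the short-interval setting, but replacing additive Fourier arguments with multiplicative character arguments. First I would open the square and invoke orthogonality of Dirichlet characters $\chi\pmod d$ to rewrite
$$\sum_{\substack{a\pmod d\\ (a,d)=1}} \bigl|\Delta_w(\tau_k;X,d,a)\bigr|^2 = \frac{1}{\varphi(d)} \sum_{\chi\neq \chi_0} \bigl|D(\chi)\bigr|^2, \qquad D(\chi) := \sum_n \tau_k(n)\chi(n) w(n/X),$$
thereby reducing the variance to a second moment of smoothed character-twisted divisor sums. I would then parameterize each $\chi$ by its inducing primitive character $\chi^*$ of conductor $q^*\mid d$, so that the variance becomes a weighted sum over primitive characters of their squared twisted divisor sums, with a multiplicative ``uncompletion'' factor accounting for the ramified primes.

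Next, for each primitive $\chi^*$ I would apply the Vorono\"i (i.e.\ functional-equation) summation formula for $L(s,\chi^*)^k$, derived via Mellin inversion against $\widehat{w}(s)$ and shifting the contour past $s=0$. This converts $D(\chi^*)$ into a dual sum of the same shape involving $\tau_k(n)\overline{\chi^*}(n)$, a Gauss-sum factor $\tau(\chi^*)^k/(q^*)^{k/2}$, and a Meijer-$G$ kernel supported essentially on the dual range $n\asymp X'$ with $X'=(q^*)^k/X$. The restriction $c\in(k-1,k)$ is imposed precisely so that $X' \ll d^{k-c}$ is shorter than $d$; this is the feature that makes the second-moment analysis tractable since the dual sum is genuinely ``short'' relative to the modulus.

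Substituting the dual expressions into $|D(\chi^*)|^2$, expanding, and reassembling the sum over characters, I would separate the diagonal contribution $n_1=n_2$ from the off-diagonal. The diagonal produces a Dirichlet series $\sum_{(n,d)=1}\tau_k(n)^2 n^{-s}$ against a weight in the Mellin variable, whose $(k^2)$-fold pole at $s=1$ yields, after a standard contour shift and residue evaluation, the factor $a_k(d)(\log d)^{k^2-1}$ of \eqref{eq:akd1}. The polynomial factor $\gamma_k(c)=(k-c)^{k^2-1}/(k^2-1)!$ then arises from evaluating the truncated Perron integral over the transformed weight on the short dual scale $d^{k-c}$; the simple monomial shape (rather than the general Vandermonde integral \eqref{eq:gammakcomplicatedexpression}) reflects that in this range $c\in(k-1,k)$ only the leading boundary contribution survives.

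The main obstacle will be the off-diagonal terms and the passage from primitive to imprimitive characters when $d$ is composite. Unlike the prime-modulus treatment of Kowalski--Ricotta \cite{KowalskiRicotta2014}, one must track the M\"obius inversion over divisors $q^*\mid d$ and the associated Gauss-sum normalizations uniformly. Off-diagonal contributions $n_1\neq n_2$ in the squared dual sum are bounded using Cauchy--Schwarz together with mean-value estimates for $\tau_k$ on short intervals and for character sums; the smoothness of $w$ provides decay in the Mellin variable that absorbs oscillations. Verifying that the total off-diagonal error is $o\!\bigl(X(\log d)^{k^2-1}\bigr)$ uniformly throughout $c\in(k-1,k)$, while simultaneously matching the arithmetic constant $a_k(d)$ on the nose, will be the most technically delicate part of the argument.
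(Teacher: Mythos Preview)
Your proposal is correct and follows essentially the same route as the paper: orthogonality to reduce to primitive characters of conductor $q\mid d$, Mellin inversion plus the functional equation for $L(s,\chi^*)^k$ to pass to a dual sum of length $\asymp q^k/X\le d^{k-c}<d$, then a diagonal/off-diagonal split in which the diagonal yields the residue of $\sum_{(n,r)=1}\tau_k(n)^2 n^{-s}$ at $s=1$ and the averaging over $q\mid d$ reconstitutes $a_k(d)$. The only notable deviation is your plan to bound the off-diagonal by Cauchy--Schwarz and mean-value estimates, whereas the paper simply evaluates $\sum^*_{\chi_1\bmod q}\overline{\chi_1}(n)\chi_1(m)$ via M\"obius/Euler and then estimates trivially, exploiting that the dual length is below the modulus so the congruence $m\equiv n$ forces near-emptiness; either works here, but the paper's direct route is what produces the clean power saving and is where the restriction $c>k-1$ enters.
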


\begin{remark}
	By Theorem 1.5 \cite[p. 173]{KR^32018}, Lemma 4.1 \cite[p. 180]{KR^32018}, and the results in Section 4.4.3 \cite[pg. 194]{KR^32018} of KRRR, the complicated expression for $\gamma_k(c)$ given in \eqref{eq:gammakcomplicatedexpression} agrees with the expression \eqref{eq:gammakcsimple}, when $k-1 \le c < k$.
\end{remark}

\begin{remark}
	Because of the normalization \eqref{eq:weightwNormalization} on the weight $w(y)$, the right side of \eqref{eq:cor:2} does not depend on $w$; see Lemma \ref{lemma:w} below.
\end{remark}


The leading order main term of \eqref{eq:cor:2} is extracted from the full main term \eqref{eq:thm1} below, given as a contour integral with a power saving error term. All other lower order main terms of the smoothed variance \eqref{eq:cor:2} can, in principal, be written down from \eqref{eq:thm1}.

\begin{thm}[Full main term with power-saving error term] \label{theorem:1}
	Fix parameters 
	\begin{equation} \label{eq:k&delta}
		k\ge 3,\ \delta >0,
	\end{equation}
	and
	\begin{equation} \label{eq:c}
		c \in [k-1+\delta, k-\delta].
	\end{equation}
	For $d\ge 1$, put 
	\begin{equation}
		X = d^c.
	\end{equation}
	Let $w(y)$ be a smooth function with support in $[1,2]$ such that $\int w(y)^2 dy =1$.
	Then, we have the asymptotic equality, as $d\to \infty$,
	\begin{align} \label{eq:thm1}
		&\sum_{\substack{1\le a\le d\\ (a,d)=1}}
		\left|
		\Delta_w(\tau_k;X,d,a)
		\right|^2
		= X M_{k,c}(d) + O_{\epsilon,\delta, k} \left(
		X^{1+\epsilon} d^{-\frac{\delta}{3k + 2}}
		\right),
	\end{align}
	for some $\epsilon>0$, where 
	\begin{equation}
		M_{k,c}(d)
		=
		\frac{1}{\varphi(d)}
		\sum_{\substack{d=qr\\ q\ge Q}}
		\frac{1}{2 \pi i} \int_{(2)}
		\left(\frac{(q/ \pi)^k}{X} \right)^{s-1}
		\sum_{\substack{n=1\\ (n,r)=1}}^\infty
		\frac{\tau_k(n)^2}{n^s} 
		\displaystyle\sideset{}{^*}\sum_{\chi_1 \pmod q}
		\mathcal{M}[|f_{r, \chi_1}|^2](s) ds,
	\end{equation}
	with $Q$ given in \eqref{eq:Q}, $\mathcal{M}[|f_{r, \chi_1}|^2]$ denoting the Mellin transform of $|f_{r, \chi_1}(\cdot)|^2$, and $f_{r, \chi_1}(\cdot)$ a complicated inverse Mellin transform of the Gamma factors and $w(y)$, given explicitly in \eqref{eq:128}. The $^*$ on the summation over $\chi_1 (\bmod\ q)$ restricts to summing over primitive characters $\chi_1$ modulo $q$. The implied constant in the error term of \eqref{eq:thm1} is effective, dependent on $\epsilon, \delta, k$, but uniform in $c$ for all $c$ in the range \eqref{eq:c}.
\end{thm}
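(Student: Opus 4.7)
\emph{Plan of proof.} I would follow the overall architecture that Lester devised in \cite{Lester2016} for the short-interval variance, replacing his additive-character detection by an expansion over the multiplicative characters of $(\mathbb{Z}/d\mathbb{Z})^{\times}$, which is what makes composite moduli tractable. First, by orthogonality of Dirichlet characters modulo $d$,
\begin{equation*}
\sum_{(a,d)=1}\bigl|\Delta_w(\tau_k;X,d,a)\bigr|^2 = \frac{1}{\varphi(d)}\sum_{\chi\ne\chi_0\!\!\pmod d}|S_\chi|^2,\qquad S_\chi := \sum_n\tau_k(n)\chi(n)w(n/X),
\end{equation*}
and regrouping each non-principal $\chi$ by its primitive parent $\chi_1\pmod q$ with $d=qr$ and $q>1$ converts the outer sum into a double sum over divisors $q>1$ of $d$ and primitive characters $\chi_1\pmod q$. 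On the inner $S_\chi$ I would apply Mellin inversion to $w$, identify the resulting Dirichlet series as $L(s,\chi_1)^k$ times a finite Euler product over $p\mid r$, and use the functional equation of $L(s,\chi_1)^k$ (i.e.\ the twisted $GL_k$ Vorono\"i formula) to shift to the dual representation
\begin{equation*}
S_\chi = \frac{X\,\varepsilon(\chi_1)^k}{(q/\pi)^{k/2}}\sum_{(n,r)=1}\tau_k(n)\bar\chi_1(n)\, f_{r,\chi_1}\!\left(\frac{nX}{(q/\pi)^k}\right),
\end{equation*}
with $f_{r,\chi_1}$ the explicit inverse Mellin kernel of \eqref{eq:128}, whose effective support confines $n \ll (q/\pi)^k X^{-1} d^{\epsilon}$. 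No pole is crossed because $\chi_1$ is non-principal.

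Next I would open $|S_\chi|^2$ and sum over primitive $\chi_1\pmod q$; the factor $|\varepsilon(\chi_1)|^2=q$ combined with character orthogonality reduces the double sum to pairs $(n_1,n_2)$ with $n_1\equiv n_2\pmod q$ and $(n_1n_2,r)=1$. The diagonal contribution $n_1=n_2=n$, reassembled via Mellin inversion of $|f_{r,\chi_1}|^2$ and summed over divisors $q\ge Q$ of $d$, is exactly
\begin{equation*}
\frac{X}{\varphi(d)}\sum_{\substack{d=qr\\ q\ge Q}}\frac{1}{2\pi i}\int_{(2)}\!\!\left(\frac{(q/\pi)^k}{X}\right)^{\!s-1}\!\sum_{(n,r)=1}\frac{\tau_k(n)^2}{n^s}\displaystyle\sideset{}{^*}\sum_{\chi_1\pmod q}\mathcal{M}[|f_{r,\chi_1}|^2](s)\,ds = X M_{k,c}(d).
\end{equation*}
What remains to estimate are (i) the contribution of small conductors $q<Q$ to the diagonal, which I would handle by direct size bounds on $f_{r,\chi_1}$ together with the trivial count $\sum^{*}_{\chi_1}1\le\varphi(q)$, and (ii) the off-diagonal pairs $n_1\neq n_2$ with $n_1\equiv n_2\pmod q$, bounded by combining the rigidity of the congruence with the effective support $n_i\ll q^k/X$ of the kernel and pointwise control on $f_{r,\chi_1}$.

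The principal obstacle will be (ii): one needs a genuine power saving in the off-diagonal, uniform in the primitive character $\chi_1$ (whose parity enters $f_{r,\chi_1}$ through the Gamma factors) and uniform in $c$ over the closed range $[k-1+\delta,k-\delta]$. For $q$ close to $d$ the congruence $n_1\equiv n_2\pmod q$ admits $\ll (q^{k-1}/X)^2 q$ pairs in the support of the kernel, and a careful second-moment argument in the spirit of Lester, tracking how $f_{r,\chi_1}$ oscillates in the transition region $nX/(q/\pi)^k\asymp 1$, should yield an off-diagonal bound of order $X^{1+\epsilon}(d/q)^{\beta}$ for a positive $\beta=\beta(k)$. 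Balancing this against the small-$q$ diagonal remainder then dictates the choice of $Q$ recorded in \eqref{eq:Q} and produces the claimed error $O_{\epsilon,\delta,k}\bigl(X^{1+\epsilon}d^{-\delta/(3k+2)}\bigr)$; the exponent $3k+2$ is precisely what falls out of this optimization.
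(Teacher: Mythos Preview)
Your architecture matches the paper's proof: orthogonality, reduction to primitive $\chi_1\pmod q$ with $d=qr$, Mellin inversion plus the functional equation to pass to the dual sum, then diagonal versus off-diagonal with the small-$q$ range discarded. Three points where your sketch departs from what actually happens.

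First, a slip: with the standard (and the paper's) normalization the root number has $|\varepsilon(\chi_1)|=1$, not $|\varepsilon(\chi_1)|^2=q$; the $q$-dependence is already carried entirely by the factor $(q/\pi)^{k(s-1/2)}$ in $\gamma(s,\chi_1)^k$, and $\varepsilon^k\overline{\varepsilon}^k$ simply drops out of $|S_\chi|^2$. Second, orthogonality over \emph{primitive} characters modulo $q$ does not collapse to a clean congruence $n_1\equiv n_2\pmod q$: one gets instead (as in \eqref{eq:orthogonality}) a M\"obius-weighted sum $\sum_{\mathfrak d\mid q}\mu(q/\mathfrak d)\varphi(\mathfrak d)$ over divisors $\mathfrak d$ with $\mathfrak d$ dividing the appropriate difference, and the off-diagonal must be handled divisor by divisor.

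Third, and most substantively: the off-diagonal is not the delicate step, and it is not what is balanced against the small-$q$ error to fix $Q$. Because $c\ge k-1+\delta$, the truncated dual length $N=d^{k-c+\delta/2}$ is below $d^{1-\delta/2}$; after the divisor decomposition the off-diagonal with $\mathfrak d\le N$ is bounded \emph{trivially} (divisor bound $\tau_k(n)\ll n^\epsilon$ plus counting residues mod $\mathfrak d$), giving $\ll X^{1+\epsilon}N/\varphi(d)\ll X^{1+\epsilon}d^{-\delta/2}$, which is independent of $Q$ and requires no oscillation or second-moment input. The exponent $1/(3k+2)$ does not come from balancing the off-diagonal against small $q$; it comes from balancing the small-$q$ error $X^{1+\epsilon}Q^{k+1}/(Xd)$ of Step~1 against the error $X^{1+\epsilon}(X/Q^k)^{1/2}$ that appears when one later shifts the contour in $M_{k,c}(d)$ to $\Re s=\tfrac12$ to extract the leading asymptotic. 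Equating those two forces $Q=d^{(3c+2)/(3k+2)}$.
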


\begin{remark}
	Because of the $\epsilon$ in the error term of \eqref{eq:thm1}, our method of proof fails to handle the endpoint $c=k-1$, corresponding to $\delta=0$. More specifically, this restriction came from trivially bounding the off-diagonal terms; see the bound \eqref{eq:off-diag}. Thus, when $c\le k-1$, the off-diagonal terms start contributing to the main term and can no longer be ignored.
\end{remark}

\begin{remark}
	It may be possible, by using the asymptotic large sieve of Conrey-Iwaniec-Soundararajan \cite{ConreyIwaniecSound2012} together with the functional equation, to handle the off-diagonal terms and extend the range for $c$ to a slightly wider range
	\begin{equation} \label{eq:1145b}
		c \in \left[
		k-1 + \delta
		-\frac{2}{k},
		k-\delta
		\right],
		\quad
		(\delta > 0),
	\end{equation}
	for the smoothed averaged variance
	\begin{equation} \label{eq:1145}
		\sum_{d} W\left(\frac{d}{D}\right)
		\sum_{\substack{1\le a\le d\\ (a,d)=1}}
		\left|
		\Delta_w(\tau_k;X,d,a)
		\right|^2
	\end{equation}
	with
	\begin{equation}
		D = X^{1/c}.
	\end{equation}
	This average \eqref{eq:1145} in $d$ could be quite short, in contrast to the typical Barban-Davenport-Halberstam-type results (c.f. \eqref{eq:BDH}). We hope to return to this question in a future article.
\end{remark}

\begin{remark}
	It remains an interesting challenge to remove the smooth weight $w(n/X)$ in \eqref{eq:thm1}.
\end{remark}

We believe that the asymptotic for the smoothed variance \eqref{eq:cor:2} should hold for all $c \in (0,k)$, with the expression for the piece-wise polynomial $\gamma_k(c)$ given in \eqref{eq:gammakcomplicatedexpression}. The asymptotic \eqref{eq:cor:2} for $0<c<1$ was essentially established in \cite[Lemma 2, pg. 13]{RodgersSoundararajan2018}. This, together with Corollary \ref{cor:2} above, therefore suggests the following conjecture over smooth cutoffs for the variance of $\tau_k(n)$ in AP's:

\begin{conjecture} \label{conj:1}
	Let $w(y)$ be a smooth function supported in $[1,2]$ with
	\begin{equation}
		\int w(y)^2 dy = 1,
	\end{equation}
	and
	\begin{equation}
		\mathcal{M}[w](\sigma + it) \ll_\ell \frac{1}{1+ |t|^\ell}
	\end{equation}
	uniformly for all $|\sigma| \le A$ for any fixed positive $A>0$, for all positive integers $\ell$, where $\mathcal{M}[w]$ denotes the Mellin transform of $w$ defined in Section \ref{sec:Notations}.
	Then, for $X, d \to \infty$ such that $\frac{\log X}{\log d} \to c \in (0,k)$, we have
	\begin{equation}
		\sum_{\substack{1\le a\le d\\ (a,d)=1}}
		\left|
		\Delta_w(\tau_k;X,d,a)
		\right|^2
		\sim
		a_k(d) \gamma_k(c) X (\log d)^{k^2-1},
	\end{equation}
	where the arithmetic constant $a_k(d)$ and the piecewise polynomial $\gamma_k(c)$ are given in \eqref{eq:akd1} and \eqref{eq:gammakcomplicatedexpression}, respectively.
\end{conjecture}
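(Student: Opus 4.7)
The plan is to extend the proof of Theorem~\ref{theorem:1} to the full range $c \in (0,k)$ by replacing the trivial treatment of the off-diagonal terms, which is precisely what forces the current restriction to $c > k-1$. After the character-twisted Vorono\"i summation and orthogonality of Dirichlet characters, the variance is expressed as a sum over divisor pairs $d=qr$ and primitive $\chi_1 \pmod q$ of a Mellin integral involving $|f_{r,\chi_1}|^2$. In the current argument only $q \ge Q$ is retained as diagonal and the remainder is controlled by the bound \eqref{eq:off-diag}; to reach smaller $c$ those off-diagonal blocks must be evaluated, not discarded.

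My first step would be to redo the computation of \eqref{eq:thm1} keeping every divisor pair $d=qr$, so that the variance is written as a single residue-plus-error expression. When $c$ crosses below $k-1$, the dual length $(q/\pi)^k/X$ exceeds one, and one application of the functional equation of $L(s,\chi_1)$ returns a twisted divisor sum of shorter effective length whose new diagonal contributes the next polynomial piece of the predicted $\gamma_k(c)$. Iterating the functional equation $j$ times for $c \in (k-j-1,\,k-j]$ should produce, by induction on $j$ and matching against the Keating--Snaith combinatorics summarized in Section~4.4.3 of \cite{KR^32018}, the full piecewise-polynomial structure \eqref{eq:gammakcomplicatedexpression}, with the base case $c \in (0,1)$ supplied by \cite[Lemma~2]{RodgersSoundararajan2018}.

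The main obstacle, and the reason this remains a conjecture, is that the new off-diagonal terms produced at stage $j \ge 2$ are shifted moment-like averages of $L(\tfrac{1}{2}+it,\chi_1)\overline{L(\tfrac{1}{2}+it,\chi_2)}$ raised to powers depending on $j$, summed over primitive characters modulo $q$ and further over $q \mid d$. Evaluating these for $k \ge 4$ is the central open problem in the moment theory of Dirichlet $L$-functions. In the smoothed, $d$-averaged form \eqref{eq:1145}, the asymptotic large sieve of Conrey--Iwaniec--Soundararajan \cite{ConreyIwaniecSound2012} should handle at least the first new interval, giving the range \eqref{eq:1145b} anticipated in the remarks. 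For individual composite $d$ across the full range $c \in (0,k)$, however, removing the $d$-average will likely require either the Generalized Lindel\"of hypothesis, along the lines of Lester's conditional short-intervals work \cite{Lester2016}, or a substantial new input into the theory of moments of $L(\tfrac{1}{2},\chi)$ over characters modulo composite $d$.
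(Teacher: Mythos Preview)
The statement you are addressing is Conjecture~\ref{conj:1}, not a theorem; the paper does not supply a proof, and indeed presents it explicitly as an open problem motivated by Corollary~\ref{cor:2} and \cite[Lemma~2]{RodgersSoundararajan2018}. So there is no ``paper's own proof'' to compare against, and your write-up is best read as a research outline rather than a proof. To your credit, you say as much in your final paragraph.

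That said, the central mechanism you propose has a genuine gap. You suggest that for $c\in(k-j-1,k-j]$ one should ``iterate the functional equation $j$ times'' and that each application ``returns a twisted divisor sum of shorter effective length.'' But the functional equation \eqref{eq:FE} is an involution: applying it to $L^k(s,\overline{\chi}_1)$ simply returns $L^k(1-s,\chi_1)$ up to the gamma factor, which undoes the first application rather than producing a new, shorter dual sum. Concretely, after one application the dual length is $Y=d^k/X=d^{k-c}$ (cf.\ \eqref{eq:Y}); when $c<k-1$ this satisfies $Y>d$, so the dual sum is \emph{longer} than the modulus, and a second pass through \eqref{eq:FE} would just swap $X$ and $Y$ back. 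What actually changes as $c$ drops below $k-1$ is that the off-diagonal congruence $m\equiv n b_r\pmod{\mathfrak d}$ in \eqref{eq:900} acquires genuinely many solutions with $m\neq n$, and these contribute new main terms that build the next polynomial piece of $\gamma_k(c)$. Extracting those main terms is a shifted-convolution/correlation problem for $\tau_k$, not an iteration of \eqref{eq:FE}.

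Your diagnosis of the ultimate obstruction is accurate: evaluating those off-diagonal blocks for general $k$ is tantamount to the $2k$th-moment problem for Dirichlet $L$-functions, which is exactly why the paper leaves the range $c\le k-1$ conjectural and why the remark around \eqref{eq:1145b} points only to a modest extension under an additional average in $d$. But the specific inductive device you describe would need to be replaced by an honest treatment of the off-diagonal in \eqref{eq:900}, for instance via the delta method, spectral expansion of the relevant shifted convolutions, or the CFKRS recipe, before one could speak of a proof strategy rather than a heuristic.
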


We will prove Theorem \ref{theorem:1} first, then derive Corollary \ref{cor:2} from this.

Below is the organization of the paper. The key ideas of the proofs are outlined in Section \ref{sec:outlineOfTheProof}. Section \ref{sec:Notations} lists notations, parameters, and conventions used in this paper, which largely follows those in \cite{NguyenDivisorFunction}. Preparatory lemmas are collected in Section \ref{sec:lemmas}, and the proofs are given in Sections \ref{sec:ProofOfTheorem} and \ref{sec:ProofofCorollary}.

\section{Outline of the proofs} \label{sec:outlineOfTheProof}

We briefly explain here the wider range \eqref{eq:star} for $c$ in our result as compared to \eqref{eq:205}, and how we handle the off-diagonal and error terms. For ease of exposition, suppose $\chi$ is a primitive Dirichlet character modulo a prime $d=p$. By orthogonality of Dirichlet characters, the variance $$\sum_{\substack{1\le a\le d\\ (a,d)=1}}
\left|
\Delta_w(\tau_k;X,d,a)
\right|^2$$ is transformed to sums of the primitive character sum
\begin{equation} \label{eq:1148}
	\sum_{n\le X} \tau_k(n) \chi(n)
\end{equation}
of length $X$ (see Lemma \ref{lem:non-principalCharacters}). By Perron's formula (or Mellin inversion, in the case with smooth weights), the sum \eqref{eq:1148} is related to the quantity
\begin{equation} \label{eq:302}
	X^s L^k(s,\chi)
\end{equation}
appearing in the integrand of such formula. By a change of variables $s \to 1-s$ and applying the functional equation to $L(1-s,\chi)$, $\chi$ primitive, the quantity \eqref{eq:302} becomes
\begin{equation} \label{eq:312}
	X^{1-s} \gamma^k(s,\chi) L^k(s,\overline{\chi}),
\end{equation}
which is roughly, by standard estimates of the gamma factors,
\begin{equation} \label{eq:304}
	X^{1-s} {(p^k)}^{s-\frac{1}{2}} L^k(s,\overline{\chi})
	=
	\left(\frac{X}{Y}\right)^{1/2}
	Y^s L^k(s,\overline{\chi}),
\end{equation}
where
\begin{equation}
	XY = p^k.
\end{equation}
Thus, by Perron's formula again, the quantity $Y^s L^k(s,\overline{\chi})$ on the right side of the above is related to the dual sum
\begin{equation}
	\sum_{n \le Y} \tau_k(n) \overline{\chi}(n)
\end{equation}
of length $Y$. The upshot is, while $X = p^c$ is long when $c \in (k-1,k)$,
\begin{equation}
	Y = \frac{p^k}{X}
	= p^{k-c} < p
\end{equation}
is short for $c$ in this range. This implies that, if we had $n\equiv m (\bmod \ p)$ and $n,m \le Y$, which is $< p$, then $n$ and $m$ must be equal and there are no contributions from the off-diagonal terms $n \neq m$. 

More over, since the modulus is prime, all primitive characters modulo $p$ are non-principal, and we have, by orthogonality of all characters, that
\begin{equation}
	\frac{1}{\varphi(p)} \displaystyle\sideset{}{^*}\sum_{\chi (\text{mod } p)}
	\chi(n) \overline{\chi}(m)
	= 1 - \frac{1}{\varphi(p)}
	= 1 + O(p^{-1}).
\end{equation}
Hence, roughly speaking, the saving $p^{-1}$ in the error term of the above allows us to take $c$ to be as small as $k-1$, where the $-1$ corresponds to the $-1$ in the exponent of $p$. Moreover, direct use of summation formulas from \cite{FriedlanderIwaniec2005} when applied to the variance \eqref{eq:2238} yields an error term that is larger than the main term--this is because the absolute value squared in \eqref{eq:2238} requires better than square-root cancellation in the error term, say, $E$ in order for $|E|^2 \ll X^{1-\epsilon}$. We circumvent this by introducing smooth weights in \eqref{eq:2238}, and combine this with the methods of Friedlander and Iwaniec in \cite{FriedlanderIwaniec2005} in order to get better control on the error terms. These two ingredients in part lead to the full asymptotic equality, with a power-saving error term, for this smoothed version of the variance of $\tau_k(n)$ in arithmetic progressions unconditionally for all $k\ge 3$, in a restricted range. 

Since the proofs are quite lengthy and technical in details, we shall divide the proofs into smaller steps and outline in this section the key ideas in each step.

\textbf{Outline of the proof of Theorem \ref{theorem:1}}

The main idea in the proof of \eqref{eq:thm1} is to develop a smoothed version of Vorono\"i summation twisted by multiplicative characters for the character sum \eqref{eq:1148}.
We break the proof of Theorem \ref{theorem:1} into four steps.

\nameref{subsection:step1}. We first rewrite the variance \eqref{eq:thm1} as a sum over primitive characters and apply the functional equation to $L(1-s,\chi)$. We show that contributions from characters $\chi$ with modulus $q<Q$ are negligible, where the parameter $Q$ is suitably chosen. 

\nameref{subsection:step2}. Assume $q \ge Q$. We introduce parameter $N$ and show that contributions from terms with $n>N$ in $\sum_{\substack{(n,r)=1}}
\frac{\tau_k(n) \overline{\chi_1}(n)}{n^s}$ are acceptable by pushing the line of integration far to the right.

\nameref{subsection:step3}. Assume $q \ge Q$ and $n\le N$. We estimate the off-diagonal contributions trivially by using Lemma \ref{lemma:tauBound} and show they do not exceed the main term. 
It is here that the lower bound for the range of $c$ \eqref{eq:c} in Theorem \ref{theorem:1} is used crucially.

\nameref{subsection:step4}. Assume $q \ge Q$ and $n\le N$. We apply Mellin inversion to the finite sum over $n\le N$ and extract the contribution from the diagonal terms as a contour integral.

\textbf{Outline of the proof of Corollary \ref{cor:2}} 

We extract the leading order term in the full main term from Theorem \ref{theorem:1} and also need to show that the other main terms are indeed of lower order than the leading main term. A certain arithmetic function arises which is then evaluated to match the arithmetic factor $a_k(d)$. Below are the steps in the proof of Corollary \ref{cor:2}.

\nameref{subsection:stepa}. We write the contour integral from \eqref{eq:thm1} as a residue plus an error term. This is fairly straightforward.

\nameref{subsection:stepb}. We show that contribution from the error term in \textbf{Step a} is small. It is here that the condition $q\ge Q$ is needed.

\nameref{subsection:stepd}. We write the residue from \textbf{Step a} as a polynomial. We then evaluate the contribution from the leading term of the residue from \textbf{Step a} and verify that the leading constants match.

\nameref{subsection:stepc}. We show that contributions from lower order terms are of order of magnitude smaller than the leading order term.

\section{Notations, parameters, and conventions}
\label{sec:Notations}

$i,j,\ell$--non-negative integers.

$p, p_i, p_j$--are prime numbers.

$d,a,n,m, k, q,r,s, \alpha_i, \beta_j, Q, T, N$--positive integers.

$X$--a large real number.

$\Lambda(n)$--the von Mangoldt function.

$\tau_k(n)$--the $k$-fold divisor function; $\tau_2(n) = \tau(n)$, the usual divisor function.

$\varphi(n)$--the Euler's totient function.

$\varphi^*(q)$--the number of primitive characters modulo $q$.

$\mu(n)$--the M\"obius function.

$s=\sigma+ it$	

$\Gamma(s)$-the Gamma function.

$\int_{(\sigma)}$--means $\int_{\sigma-i \infty}^{\sigma + i \infty}$.

$\mathcal{M}[f](s)$--the Mellin transform of a suitable function $f$, i.e,
\begin{equation}
	\mathcal{M}[f](s)
	=
	\int_0^\infty
	f(x) x^{s-1} dx.
\end{equation}

$\mathcal{M}^{-1}[F](x)$--the inverse Mellin transform of a suitable function $F(s)$, i.e,
\begin{equation}
	\mathcal{M}^{-1}[F](x)
	= 
	\frac{1}{2 \pi i}
	\int_{(\sigma)}
	F(s) x^{-s} ds,\ (\sigma >1).
\end{equation}

$\chi(n)$--a Dirichlet character.

$\overline{\chi}(n)$--means the complex conjugate of $\chi(n)$.

$\tau(\chi)$--the Gauss sum of the character $\chi$.

$m\equiv a (d)$ means $m\equiv a (\bmod\ d).$

$\alpha \star \beta$ means the Dirichlet convolution of the arithmetic functions $\alpha$ and $\beta$, i.e.,
\begin{equation}
	(\alpha \star \beta)(d)
	= \sum_{d=qr} \alpha(q) \beta(r).
\end{equation}

$\displaystyle\sideset{}{'}\sum_{\chi (\text{mod } d)}$--means a summation over nonprincipal characters $\chi (\bmod\ d).$

$\displaystyle\sideset{}{^*}\sum_{\chi (\text{mod } d)}$--means a summation over primitive characters $\chi (\bmod\ d).$

$\displaystyle\sum_{b (\text{mod } d)}$--means
$\displaystyle\sum_{b=1}^d$.

$\displaystyle\sideset{}{^*}\sum_{b (\text{mod } d)}$--means
$ \displaystyle\sum_{\substack{b=1\\ (b,d)=1}}^d$.

$\epsilon$--any sufficiently small, positive constant, not necessarily the same in each occurrence.

$\delta$--a fixed small positive number.

$A$--any sufficiently large, positive constant, not necessarily the same in each occurrence.

$B$--some large positive constant, not necessarily the same in each occurrence.

\begin{table}[h!]
	\caption{Table of parameters and their first appearance.}
	\label{table:3}
	\begin{tabular}{ l c}
		\hline \hline
		Parameters& First appearance\\
		\hline \hline
		$k \ge 3$& \\
		$\delta >0$& \eqref{eq:k&delta}\\
		$c \in [k-1 + \delta, k - \delta]$& \eqref{eq:c}\\
		$X = d^c$& \eqref{eq:X}\\
		$Q = d^{\frac{3c + 2}{3k + 2}}$& \eqref{eq:Q}\\
		\hypertarget{parameter:N}{$N = d^{k - c + \frac{\delta}{2}}$}
		& \eqref{eq:N}\\
		$P = (\log d)^2$& \eqref{eq:P}\\
		\hline \hline
	\end{tabular}
\end{table}

We follow standard notations and write $f(X)=O(g(X))$ or $f(X) \ll g(X)$ to mean that $|f(X)| \le Cg(X)$ for some fixed constant $C$, and $f(X) = o(g(X))$ if $|f(X)|\le c(X) g(X)$ for some function $c(X)$ that goes to zero as $X$ goes to infinity.

\bigskip
We begin the proofs with some preliminary lemmas.

\section{Preparatory Lemmas}
\label{sec:lemmas}


\begin{lem}\label{lemma:tauBound}
	For any $\epsilon>0$, we have
	\begin{equation} \label{eq:tauBound}
		\tau_k(n)
		\ll_{\epsilon,k} n^\epsilon.
	\end{equation}
\end{lem}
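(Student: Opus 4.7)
The plan is to exploit that $\tau_k$ is multiplicative and then bound the resulting Euler product factor by factor, separating small primes (finitely many) from large primes.

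First, I would recall the prime-power formula $\tau_k(p^\alpha) = \binom{k+\alpha-1}{k-1}$ already stated in the introduction. Since $\tau_k$ is multiplicative and $n \mapsto n^\epsilon$ is completely multiplicative, writing $n = \prod_{p} p^{\alpha_p}$ gives
\begin{equation}
\frac{\tau_k(n)}{n^\epsilon} \;=\; \prod_{p^\alpha \| n} \frac{\binom{k+\alpha-1}{k-1}}{p^{\alpha\epsilon}}.
\end{equation}
It therefore suffices to show that each local factor is bounded and that all but finitely many factors are at most $1$, so that the full product is bounded uniformly in $n$ by a constant depending only on $\epsilon$ and $k$.

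For the pointwise bound, I would use that $\binom{k+\alpha-1}{k-1}$ is a polynomial in $\alpha$ of degree $k-1$, so there is a constant $C_k$ with $\binom{k+\alpha-1}{k-1} \le C_k (1+\alpha)^{k-1}$ for all $\alpha \ge 0$. Hence
\begin{equation}
\frac{\binom{k+\alpha-1}{k-1}}{p^{\alpha\epsilon}} \;\le\; C_k \, \frac{(1+\alpha)^{k-1}}{p^{\alpha\epsilon}}.
\end{equation}
Viewed as a function of the real variable $\alpha \ge 0$, the right side attains its maximum at $\alpha = (k-1)/(\epsilon\log p) - 1$, and for any $\alpha \ge 1$ it is dominated by $C_k \cdot 2^{k-1}/p^\epsilon$ once $p$ is large enough that the derivative is already negative at $\alpha = 1$. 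Thus, choosing $p_0 = p_0(\epsilon,k)$ so that $p_0^\epsilon \ge \max(2^{k-1} C_k, (k-1)/\epsilon)$, every prime $p > p_0$ contributes a factor $\le 1$ regardless of $\alpha$, while for $p \le p_0$ the factor is bounded by some $M_p < \infty$. Taking the product,
\begin{equation}
\frac{\tau_k(n)}{n^\epsilon} \;\le\; \prod_{p \le p_0} M_p \;=\; C_{\epsilon,k} < \infty,
\end{equation}
which is the asserted bound.

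There is no real obstacle here; the only mild care needed is in verifying that the cutoff $p_0$ can be chosen to depend only on $\epsilon$ and $k$ and not on $\alpha$, which is immediate from the polynomial-versus-exponential comparison above. If a cleaner presentation is preferred, one can instead simply observe that $\binom{k+\alpha-1}{k-1} \le k^\alpha$ and compare $k^\alpha$ with $p^{\alpha\epsilon}$, which makes the large-prime bound transparent and the small-prime contribution a finite product, yielding the same conclusion with essentially no calculation.
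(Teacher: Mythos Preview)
Your proof is correct; this is the standard multiplicative argument for the divisor bound. The paper itself does not give a proof but simply cites \cite[(1.81), p.~23]{IwaniecKowalski}, so there is nothing substantive to compare --- your local-factor estimate with a small-prime/large-prime split is essentially what one finds in that reference. The alternative you note at the end, using $\binom{k+\alpha-1}{k-1}\le k^\alpha$ (which follows by induction on $\alpha$ since the ratio of consecutive terms is $(k+\alpha)/(\alpha+1)\le k$), is indeed the cleanest version and makes both the large-prime bound and the finiteness of the small-prime contribution immediate.
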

\begin{proof}
	See, e.g, \cite[(1.81), p. 23]{IwaniecKowalski}.
\end{proof}

\begin{lem} \label{lem:non-principalCharacters}
	We have
	\begin{align} \label{eq:1025}
		\sum_{\substack{1\le a\le d\\ (a,d)=1}}
		\left|
		\Delta_w(\tau_k;X,d,a)
		\right|^2
		= \frac{1}{\varphi(d)}
		\displaystyle\sideset{}{^\prime} \sum_{\chi \pmod d}
		\left| \sum_n \tau_k(n) \chi(n) w\left(\frac{n}{X}\right) \right|^2.
	\end{align}
\end{lem}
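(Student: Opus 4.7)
The plan is to write $\Delta_w(\tau_k;X,d,a)$ as a sum over non-principal Dirichlet characters using orthogonality, and then apply orthogonality a second time over $a$ to collapse the double character sum to a diagonal.

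First I would apply the standard character-orthogonality identity
\[
\mathbf{1}_{n \equiv a (d)}
= \frac{1}{\varphi(d)} \sum_{\chi \pmod d} \overline{\chi}(a)\,\chi(n),
\qquad ((a,d)=1),
\]
valid because $\chi(n)=0$ when $(n,d)>1$, so the coprimality condition on $n$ is automatic. This gives
\[
\sum_{n \equiv a (d)} \tau_k(n)\, w\!\left(\tfrac{n}{X}\right)
= \frac{1}{\varphi(d)} \sum_{\chi \pmod d} \overline{\chi}(a)\, A(\chi),
\qquad A(\chi) := \sum_n \tau_k(n)\,\chi(n)\, w\!\left(\tfrac{n}{X}\right).
\]
The principal character $\chi_0$ contributes exactly $\frac{1}{\varphi(d)} \sum_{(n,d)=1} \tau_k(n) w(n/X)$, which is precisely the subtracted term in $\Delta_w(\tau_k;X,d,a)$. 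Hence
\[
\Delta_w(\tau_k;X,d,a) = \frac{1}{\varphi(d)} \displaystyle\sideset{}{^\prime}\sum_{\chi \pmod d} \overline{\chi}(a)\, A(\chi).
\]

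Next I would square this and sum over $a$ coprime to $d$, which produces a double sum
\[
\sum_{\substack{1\le a\le d \\ (a,d)=1}} |\Delta_w|^2
= \frac{1}{\varphi(d)^2} \displaystyle\sideset{}{^\prime}\sum_{\chi_1} \displaystyle\sideset{}{^\prime}\sum_{\chi_2} A(\chi_1)\, \overline{A(\chi_2)} \sum_{\substack{1\le a\le d \\ (a,d)=1}} \overline{\chi_1}(a)\, \chi_2(a).
\]
The second orthogonality relation
\[
\sum_{\substack{1\le a\le d \\ (a,d)=1}} \overline{\chi_1}(a)\, \chi_2(a) = \varphi(d)\, \mathbf{1}_{\chi_1 = \chi_2}
\]
collapses the $\chi_2$ sum and cancels one factor of $\varphi(d)$, yielding the claimed identity. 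Since every step is an exact orthogonality manipulation, there is essentially no obstacle; the only minor point to get right is observing that $\chi(n)$ vanishes on $n$ not coprime to $d$ so the sums over $n$ can be left unrestricted, and that the principal-character contribution matches the expected-value subtracted inside $\Delta_w$.
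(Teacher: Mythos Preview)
Your proposal is correct and follows essentially the same approach as the paper: expand $\Delta_w$ via character orthogonality, separate off the principal character as the subtracted mean, then square, sum over $a$, and use the second orthogonality relation to diagonalize. The paper's proof is structurally identical, just written out with the double $\chi_1,\chi_2$ sum displayed explicitly before collapsing.
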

\begin{proof}
	By the orthogonality relation
	\begin{equation}
		\frac{1}{\varphi(d)}
		\sum_{\chi \pmod d}
		\chi(n) \overline{\chi}(a)
		= \begin{cases}
			1,& \text{if } n\equiv a (d),\\
			0,& \text{otherwise},
		\end{cases}
	\end{equation}
	we have
	\begin{equation}
		\Delta_w(\tau_k; X, d,a)
		= 
		\frac{1}{\varphi(d)}
		\displaystyle\sideset{}{^\prime}\sum_{\chi \pmod d}
		\overline{\chi}(a)
		\sum_{\substack{n}}
		\tau_k(n) 
		\chi(n)
		w\left(\frac{n}{X}\right).
	\end{equation}
	Thus, the left side of \eqref{eq:1025} can be written as
	\begin{align} \label{eq:1001}
		\sum_{\substack{1\le a\le d\\ (a,d)=1}}
		&\frac{1}{\varphi(d)^2}
		\displaystyle\sideset{}{^\prime}\sum_{\chi_1 \pmod d}
		\overline{\chi_1}(a)
		\sum_{\substack{n}}
		\tau_k(n) 
		\chi_1(n)
		w\left(\frac{n}{X}\right)
		\\&\qquad \times
		\displaystyle\sideset{}{^\prime}\sum_{\chi_2 \pmod d}
		\chi_2 (a)
		\sum_{\substack{m}}
		\tau_k(m) 
		\overline{\chi_2}(m)
		w\left(\frac{m}{X}\right)
		\\&= \frac{1}{\varphi(d)^2}
		\displaystyle\sideset{}{^\prime}\sum_{\chi_1, \chi_2 \pmod d}
		\sum_{\substack{n,m}}
		\tau_k(n) 
		\chi_1(n)
		w\left(\frac{n}{X}\right)
		\\& \qquad \times
		\tau_k(m) 
		\overline{\chi_2}(m)
		w\left(\frac{m}{X}\right)		
		\sum_{\substack{1\le a\le d\\ (a,d)=1}}
		\overline{\chi_1}(a)
		\chi_2(a).
	\end{align}
	By the relation
	\begin{equation}
		\sum_{\substack{1\le a\le d\\ (a,d)=1}}
		\overline{\chi_1}(a)
		\chi_2(a)
		= \begin{cases}
			\varphi(d),& \text{if } \chi_1 = \chi_2,\\
			0,& \text{otherwise},
		\end{cases}
	\end{equation}
	the right side of \eqref{eq:1001} is equal to
	\begin{equation}
		\frac{1}{\varphi(d)}
		\displaystyle\sideset{}{^\prime}\sum_{\chi \pmod d}
		\sum_{\substack{n,m}}
		\tau_k(n) 
		\chi(n)
		w\left(\frac{n}{X}\right)
		\tau_k(m) 
		\overline{\chi}(m)
		w\left(\frac{m}{X}\right)	
	\end{equation}
	which is equal to the right side of \eqref{eq:1025}.
\end{proof}

We next reduce non-principal to primitive characters.

\begin{lem} \label{lem:primitiveCharacters}
	We have
	\begin{equation} \label{eq:1125b}
		\displaystyle\sideset{}{^\prime}\sum_{\chi \pmod d}
		\left| \sum_n \tau_k(n) \chi(n) w\left(\frac{n}{X}\right) \right|^2
		= 
		\sum_{\substack{d=qr\\ q>1}}\
		\displaystyle\sideset{}{^*}\sum_{\chi_1 (\bmod q)}
		\left| \sum_{(n,r)=1} \tau_k(n) \chi_1(n) w\left(\frac{n}{X}\right) \right|^2.
	\end{equation}
\end{lem}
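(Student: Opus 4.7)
The plan is to invoke the classical correspondence between Dirichlet characters modulo $d$ and primitive characters modulo divisors of $d$. Each character $\chi \pmod d$ is induced by a unique primitive character $\chi_1$ modulo its conductor $q = q(\chi)$, with $q \mid d$; writing $d = qr$, the relation is
\begin{equation}
\chi(n) = \chi_1(n) \cdot \mathbf{1}_{(n,r)=1}.
\end{equation}
This I would verify by a short case analysis on $\gcd(n,q)$ and $\gcd(n,r)$: if $(n,d)=1$ both sides equal $\chi_1(n)$; if $(n,q)>1$ then $\chi_1(n)=0$ and both sides vanish; and if $(n,r)>1$ then the indicator vanishes and $\chi(n)=0$ since $(n,d)>1$. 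Moreover, $\chi$ is non-principal if and only if $q > 1$, because the only character induced from the trivial character modulo $1$ is the principal character modulo $d$.

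Substituting this factorization into the inner sum on the left-hand side of \eqref{eq:1125b}, I obtain
\begin{equation}
\sum_n \tau_k(n) \chi(n) w\!\left(\frac{n}{X}\right)
= \sum_{(n,r)=1} \tau_k(n) \chi_1(n) w\!\left(\frac{n}{X}\right).
\end{equation}
The map $\chi \longleftrightarrow (q,\chi_1)$ is a bijection between non-principal characters modulo $d$ and pairs $(q,\chi_1)$ with $q \mid d$, $q > 1$, and $\chi_1$ primitive modulo $q$. Reindexing the outer summation in \eqref{eq:1125b} through this bijection yields the right-hand side directly.

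This reduction is entirely standard and I do not anticipate any real obstacle; the only point needing care is the verification of the pointwise identity between $\chi$ and the primitive character $\chi_1$ inducing it, which rests on basic facts about conductors and induced characters.
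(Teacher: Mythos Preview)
Your proposal is correct and follows essentially the same route as the paper: both invoke the unique factorization $\chi(n)=\chi_1(n)\chi_{0,r}(n)$ (your indicator $\mathbf{1}_{(n,r)=1}$ is exactly $\chi_{0,r}(n)$) and then reindex the sum over non-principal $\chi\pmod d$ as a double sum over $q\mid d$, $q>1$, and primitive $\chi_1\pmod q$. The only difference is that you spell out the case analysis and the bijection explicitly, whereas the paper simply asserts them.
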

\begin{proof}
	If $\chi (\bmod\ d)$ is non-principal, then there is a unique $q\mid d$ with $q>1$, and a unique primitive character $\chi_1 (\bmod\ q)$, such that, with $r=d/q$,
	\begin{equation} \label{eq:1124}
		\chi(n) = \chi_1(n) \chi_{0,r}(n),
	\end{equation}
	where $\chi_{0,r}$ is the principal character modulo $r$. Thus, replacing the sum $\displaystyle\sideset{}{^\prime}\sum_{\chi \pmod d}$ by $\displaystyle\sum_{\substack{d=qr\\ q>1}}\
	\displaystyle\sideset{}{^*}\sum_{\chi_1 (\bmod q)}$ and substituting \eqref{eq:1124} into the left side of \eqref{eq:1125b} give the right side of \eqref{eq:1125b}.
\end{proof}

\begin{lem}
	For $(mn,q_1) = 1$, we have
	\begin{equation} \label{eq:orthogonality}
		\frac{1}{\varphi^*(q_1)}
		\displaystyle\sideset{}{^*}\sum_{\chi (\bmod q_1)}
		\chi(m) \overline{\chi}(n)
		= 
		\begin{cases}
			1, & \text{if } n=m,\\
			\dfrac{1}{\varphi^*(q_1)}
			\displaystyle\sum_{\substack{q_1 = q_2 r_2\\ r_2 \mid m - n}}
			\mu(q_2) \varphi(r_2), & \text{if } n \neq m.
		\end{cases}
	\end{equation}
\end{lem}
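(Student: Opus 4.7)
The plan is to reduce to the standard orthogonality relation for all characters modulo $d$ via a Möbius inversion over the conductor. Every Dirichlet character $\chi \pmod d$ is uniquely induced by a primitive character $\chi^* \pmod e$ for some $e \mid d$, and the hypothesis $(mn, q_1) = 1$ gives $(mn, d) = 1$ for every $d \mid q_1$, so $\chi(m) \overline{\chi}(n) = \chi^*(m) \overline{\chi^*}(n)$. Classifying the characters mod $d$ by their conductor yields
\[
\sum_{\chi \pmod d} \chi(m) \overline{\chi}(n)
= \sum_{e \mid d} \sideset{}{^*}\sum_{\chi^* \pmod e} \chi^*(m) \overline{\chi^*}(n)
\]
for every $d \mid q_1$, and the left-hand side equals $\varphi(d)$ when $d \mid m - n$ and $0$ otherwise, by classical orthogonality.

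Next, setting $S(e) := \sideset{}{^*}\sum_{\chi^* \pmod e} \chi^*(m) \overline{\chi^*}(n)$, the identity above reads $\sum_{e \mid d} S(e) = \varphi(d)[d \mid m-n]$ as an equation in $d \mid q_1$. Möbius inversion then yields
\[
S(q_1) = \sum_{\substack{d \mid q_1 \\ d \mid m - n}} \mu(q_1/d)\, \varphi(d)
= \sum_{\substack{q_1 = q_2 r_2 \\ r_2 \mid m-n}} \mu(q_2)\, \varphi(r_2),
\]
after relabeling $r_2 = d$, $q_2 = q_1/d$. Dividing by $\varphi^*(q_1)$ gives the stated formula in the off-diagonal case $m \neq n$.

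For the diagonal case $m = n$, every $r_2 \mid q_1$ trivially divides $m - n = 0$, so the divisor sum collapses to $(\mu \star \varphi)(q_1)$, which equals $\varphi^*(q_1)$ by the well-known identity $\varphi = 1 \star \varphi^*$ inverted via Möbius. Dividing by $\varphi^*(q_1)$ therefore gives $1$, matching the diagonal case of the lemma.

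There is no real obstacle here; the argument is essentially pure bookkeeping. The only points requiring care are performing the Möbius inversion in the correct direction (over divisors of $q_1$, not over divisors of $m - n$) and translating the resulting divisor sum into the $(q_2, r_2)$ parametrization used in the lemma's statement. No analytic input is required.
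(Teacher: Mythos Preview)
Your proof is correct and is in fact the standard argument for this identity. The paper itself does not give a proof of this lemma but simply cites \cite[Lemma 2.7, p.~348]{ChandeeLi2014}; your M\"obius-inversion-over-conductor argument is precisely the one that reference (and most expositions) uses, so there is nothing substantively different here to compare.
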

\begin{proof}
	See, e.g., \cite[Lemma 2.7, pg. 348]{ChandeeLi2014}.
\end{proof}

The next is the well-known functional equation for primitive Dirichlet $L$-functions.

\begin{thmx}
	Let $\chi (\bmod\ q)$ be a primitive character. Then, for $\sigma>1$, we have
	\begin{equation} \label{eq:FE}
		L^k(1-s, \chi)
		= \gamma^k(s,\chi)
		L^k(s, \overline{\chi}),
	\end{equation}
	where
	\begin{equation} \label{eq:1026}
		\gamma(s,\chi) = 
		\varepsilon
		\left(\frac{q}{\pi}\right)^{s-\frac{1}{2}}
		\frac{\Gamma\left(\frac{s+a}{2}\right)}{\Gamma\left(\frac{1-s+a}{2}\right)},\
		|\varepsilon| = 1,
	\end{equation}
	with
	\begin{equation}
		a = \begin{cases}
			1, &\text{if } \chi(-1) = -1,\\
			0, &\text{if } \chi(-1) = 1.
		\end{cases}
	\end{equation}
\end{thmx}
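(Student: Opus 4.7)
The stated identity is the standard functional equation for primitive Dirichlet $L$-functions, raised to the $k$-th power, so the plan is to reduce to the case $k=1$ and then take the $k$-th power of both sides. I would begin by recalling the completed $L$-function
\begin{equation}
\Lambda(s,\chi) = \left(\frac{q}{\pi}\right)^{(s+a)/2} \Gamma\!\left(\frac{s+a}{2}\right) L(s,\chi),
\end{equation}
where $a\in\{0,1\}$ is the parity of $\chi$ as in the statement. The classical functional equation, which can be derived via a theta-series argument of Riemann--Hecke or via contour shifting in a Mellin-integral representation of $L(s,\chi)$, asserts that $\Lambda(s,\chi)$ extends to an entire function (since $\chi$ is primitive and non-principal) satisfying
\begin{equation}
\Lambda(s,\chi) = \varepsilon_{\chi}\, \Lambda(1-s,\overline{\chi}),
\end{equation}
where the root number $\varepsilon_\chi = \tau(\chi)/(i^a \sqrt{q})$ has absolute value $1$.

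With this in hand, the derivation is a direct manipulation. Substituting the definitions of $\Lambda(s,\chi)$ and $\Lambda(1-s,\overline{\chi})$ and solving for $L(s,\chi)$, I obtain
\begin{equation}
L(s,\chi) = \varepsilon_{\chi} \left(\frac{q}{\pi}\right)^{\frac{1}{2}-s} \frac{\Gamma\!\left(\frac{1-s+a}{2}\right)}{\Gamma\!\left(\frac{s+a}{2}\right)} L(1-s,\overline{\chi}).
\end{equation}
Replacing $s$ by $1-s$ on both sides (which swaps the roles of $s$ and $1-s$ but leaves the parity parameter $a$ invariant, since $a$ depends only on $\chi(-1)$), I arrive at
\begin{equation}
L(1-s,\chi) = \varepsilon_{\chi}\left(\frac{q}{\pi}\right)^{s-\frac{1}{2}} \frac{\Gamma\!\left(\frac{s+a}{2}\right)}{\Gamma\!\left(\frac{1-s+a}{2}\right)} L(s,\overline{\chi}),
\end{equation}
which is precisely the $k=1$ case with $\gamma(s,\chi)$ as in \eqref{eq:1026}. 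Raising both sides to the $k$-th power yields \eqref{eq:FE}.

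There is no real obstacle here: the only delicate bookkeeping is the parity parameter $a$ and the fact that the exponent $(s+a)/2$ for $\chi$ pairs with $(1-s+a)/2$ for $\overline{\chi}$ with the \emph{same} value of $a$, because $\overline{\chi}(-1) = \chi(-1)$. Assuming $\Re s > 1$ ensures convergence of the Dirichlet series defining $L(s,\overline{\chi})$, matching the hypothesis in the statement, while the identity extends to all $s\in\C$ by analytic continuation of both sides. Since the proof is purely a reference to the standard functional equation, I would simply cite a textbook source (for instance \cite{IwaniecKowalski}) rather than reprove the theta-series transformation.
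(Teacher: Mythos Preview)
Your proposal is correct and matches the paper's approach: the paper simply cites \cite[Section 5.9]{IwaniecKowalski} for this standard result, and you likewise conclude by citing the same textbook reference after unpacking the usual completed-$L$-function derivation. The extra detail you give is fine but not required here.
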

\begin{proof}
	See, e.g, \cite[Section 5.9]{IwaniecKowalski}.
\end{proof}

\begin{lem} \label{lemma:tau}
	We have, for any $k\ge 2$,
	\begin{equation} \label{eq:839}
		\tau_k(p^j)
		= \binom{k+j-1}{k-1}.
	\end{equation}
\end{lem}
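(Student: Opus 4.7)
The plan is to reduce the claim to a standard stars-and-bars count. By the definition of $\tau_k$ given earlier in the paper, $\tau_k(p^j)$ is the number of ordered $k$-tuples $(n_1,\ldots,n_k)$ of positive integers whose product is $p^j$. Since every divisor of $p^j$ is itself a power of $p$, I would parametrize each $n_i = p^{a_i}$ with $a_i \in \mathbb{Z}_{\ge 0}$, so that the multiplicative constraint $n_1 n_2 \cdots n_k = p^j$ becomes the additive constraint $a_1 + a_2 + \cdots + a_k = j$.

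Thus $\tau_k(p^j)$ equals the number of nonnegative integer solutions $(a_1,\ldots,a_k)$ to $a_1 + \cdots + a_k = j$, which by the classical stars-and-bars identity equals $\binom{j+k-1}{k-1}$, yielding \eqref{eq:839}.

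There is essentially no obstacle; the only content is the translation from a multiplicative divisor count to an additive composition count via $p$-adic valuations. As a sanity check, I would also note the alternative derivation via the local Euler factor: expanding
\begin{equation}
(1-p^{-s})^{-k} = \sum_{j=0}^{\infty} \binom{k+j-1}{k-1} p^{-js}
\end{equation}
and comparing with the local factor at $p$ of $\zeta(s)^k = \sum_{n \ge 1} \tau_k(n) n^{-s}$ (the Dirichlet series recalled in the introduction) immediately gives $\tau_k(p^j) = \binom{k+j-1}{k-1}$, confirming \eqref{eq:839} by an independent route.
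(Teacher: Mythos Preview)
Your proposal is correct. Your primary argument---reducing to a stars-and-bars count via $p$-adic valuations---is a more elementary and self-contained route than the paper's, which instead compares the Euler product expansion of $\zeta(s)^k$ with the generalized binomial series for $(1-p^{-s})^{-k}$ and reads off coefficients; this is precisely the ``alternative derivation'' you give as a sanity check. The combinatorial argument has the advantage of being immediate from the definition and requiring no analytic machinery, while the generating-function approach makes the connection to the Dirichlet series explicit, which is more in keeping with how $\tau_k$ is used elsewhere in the paper.
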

\begin{proof}
	By the relation
	\begin{equation}
		\left( \frac{1}{1-z} \right)^k
		= \sum_{j=0}^\infty
		\binom{k+j-1}{j} z^{j},\
		(|z| < 1),
	\end{equation}
	and the Euler product
	\begin{equation}
		\zeta(s)
		= \prod_p
		\frac{1}{1-p^{-s}},\
		(\sigma >1),
	\end{equation}
	we have
	\begin{equation} \label{eq:319a}
		\zeta^k(s)
		=
		\prod_p
		\left(
		\frac{1}{1-p^{-s}}
		\right)^k
		= \prod_p 
		\sum_{j=0}^\infty
		\binom{k+j-1}{j} 
		p^{-js},\
		(\sigma >1).
	\end{equation}
	We also have
	\begin{equation} \label{eq:319b}
		\zeta^k(s)
		=\sum_{n=1}^\infty
		\frac{\tau_k(n)}{n^s}
		= \prod_p
		\sum_{j=0}^\infty
		\tau_k(p^j) p^{-js},\
		(\sigma >1).
	\end{equation}
	Thus, by \eqref{eq:319a} and \eqref{eq:319b},
	\begin{equation}
		\tau_k(p^j)
		= \binom{k+j-1}{k-1}.
	\end{equation}
\end{proof}

\begin{lem}
	We have
	\begin{equation} \label{eq:magic}
		\sum_{j=0}^\infty
		\frac{\tau_k(p^j)^2}{p^{js}}
		=
		\left(1-\frac{1}{p^s} \right)^{-(2k-1)}
		\sum_{j=0}^{k-1} \frac{\binom{k-1}{j}^2}{p^{js}}.
	\end{equation}
\end{lem}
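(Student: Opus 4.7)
The plan is to reduce the identity to a finite statement via Lemma \ref{lemma:tau} and then invoke a classical hypergeometric transformation. Substituting $\tau_k(p^j) = \binom{k+j-1}{k-1}$ and writing $u = p^{-s}$, the claim becomes
\begin{equation}
\sum_{j=0}^\infty \binom{k+j-1}{k-1}^2 u^j = \frac{1}{(1-u)^{2k-1}} \sum_{j=0}^{k-1} \binom{k-1}{j}^2 u^j,
\end{equation}
which it suffices to establish as an identity of formal power series in $u$ (it then extends to an identity of analytic functions for $|u|<1$).

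My preferred route is to recognize the left-hand side as a Gauss hypergeometric series. Writing $\binom{k+j-1}{k-1} = k(k+1)\cdots(k+j-1)/j! = (k)_j/j!$ in Pochhammer notation, the LHS equals ${}_2F_1(k,k;1;u)$. Euler's transformation
\begin{equation}
{}_2F_1(a,b;c;z) = (1-z)^{c-a-b}\, {}_2F_1(c-a,\, c-b;\, c;\, z),
\end{equation}
applied with $a=b=k$, $c=1$, yields
\begin{equation}
{}_2F_1(k,k;1;u) = (1-u)^{1-2k}\, {}_2F_1(1-k,\, 1-k;\, 1;\, u).
\end{equation}
Because $1-k$ is a non-positive integer, the series on the right terminates after $k$ terms. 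A direct expansion using $(1-k)(2-k)\cdots(j-k) = (-1)^j (k-1)!/(k-1-j)!$ for $0 \le j \le k-1$ identifies its $j$-th coefficient with $\binom{k-1}{j}^2$, giving exactly the polynomial on the RHS of \eqref{eq:magic}.

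An alternative that avoids special-function machinery is to multiply through by $(1-u)^{2k-1}$, expand by the binomial theorem, and equate coefficients of $u^m$; this reduces matters to the finite identity
\begin{equation}
\sum_{i=0}^m (-1)^i \binom{2k-1}{i} \binom{k+m-i-1}{k-1}^2 = \binom{k-1}{m}^2 \quad (0 \le m \le k-1),
\end{equation}
with the analogous sum vanishing for $m \ge k$; this can be verified by Chu--Vandermonde manipulations or by WZ/Zeilberger methods. Equivalently, one can realize the LHS as a diagonal-coefficient extraction from $(1-x)^{-k}(1-y)^{-k}$, express it as the contour integral
\begin{equation}
\frac{1}{2\pi i}\oint \frac{t^{k-1}\, dt}{(1-t)^k (t-u)^k},
\end{equation}
and evaluate the $k$-th order residue at $t=u$ via Leibniz's rule, which produces the RHS after re-indexing. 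I anticipate no serious obstacle; the only real work lies in the terminating binomial evaluation of ${}_2F_1(1-k,1-k;1;u)$, which is mechanical. The manipulations are justified in the region $|u|<1$, corresponding to $\Re s > 0$, which comfortably contains the half-plane of absolute convergence of the Dirichlet series in question.
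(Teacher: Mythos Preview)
Your proof is correct and follows essentially the same approach as the paper: reduce to the binomial form via Lemma~\ref{lemma:tau} and then appeal to a hypergeometric identity. The paper simply cites the needed relation from \cite[Proposition~C.2]{KowalskiRicotta2014}, whereas you make the hypergeometric step explicit by recognizing the series as ${}_2F_1(k,k;1;u)$ and applying Euler's transformation, which is exactly the content of that citation.
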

\begin{proof}
	We have the identity
	\begin{equation} \label{eq:839b}
		\sum_{j=0}^\infty
		\frac{\binom{k+j-1}{k-1}^2}{p^{js}}
		=
		\left(1-\frac{1}{p^s} \right)^{-(2k-1)}
		\sum_{j=0}^{k-1} \frac{\binom{k-1}{j}^2}{p^{js}}.
	\end{equation}
	This identity \eqref{eq:839b} follows from a hypergeometric relation; see, e.g., \cite[Proposition C.2, pg. 1295]{KowalskiRicotta2014}. By \eqref{eq:839}, the left side of \eqref{eq:839b} is that of \eqref{eq:magic}.
\end{proof}

The next lemma is based on Parseval formula for Mellin transforms, see, e.g., \cite[Theorem 1.17, pg. 33]{Yakubovich1996}, and makes use of the condition \eqref{eq:weightwNormalization} in statement of Corollary \ref{cor:2}.

\begin{lem} \label{lemma:w}
	We have, with $f_{r, \chi_1}$ defined as in \eqref{eq:f_r} and $\chi_1$ a generic primitive character,
	\begin{equation} \label{eq:f_r(1)=1}
		\mathcal{M}[|f_{r, \chi_1}|^2](1)
		= 1.
	\end{equation}
\end{lem}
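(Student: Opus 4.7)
The plan is to apply the Parseval--Plancherel identity for Mellin transforms twice, exploiting the symmetry coming from the functional equation \eqref{eq:FE}. First, directly from the definition of the Mellin transform at $s=1$,
$$\mathcal{M}[|f_{r,\chi_1}|^2](1)=\int_0^\infty|f_{r,\chi_1}(x)|^2\,dx,$$
and Plancherel for Mellin transforms (Theorem 1.17 of Yakubovich, cited in the statement) rewrites the right-hand side as
$$\frac{1}{2\pi}\int_{-\infty}^\infty\bigl|\mathcal{M}[f_{r,\chi_1}](\tfrac12+it)\bigr|^2\,dt.$$

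Next, I would unpack the definition of $f_{r,\chi_1}$ from \eqref{eq:128}. Because $f_{r,\chi_1}$ arises as the inverse Mellin transform of a product of the gamma factors $\gamma^k(s,\chi_1)$ from \eqref{eq:1026} (introduced when the functional equation \eqref{eq:FE} is applied to $L^k(1-s,\chi_1)$ against $X^s\mathcal{M}[w](s)$) and $\mathcal{M}[w]$, its Mellin transform on the critical line takes the form $\gamma^k(\tfrac12+it,\chi_1)\,\mathcal{M}[w](\tfrac12-it)$, up to factors of modulus one arising from the $(r/\pi)^{s-1/2}$ and $X^{s-1}$ bookkeeping. The key observation is that $|\gamma(\tfrac12+it,\chi_1)|=1$: the prefactor $\varepsilon(q/\pi)^{s-1/2}$ has modulus one on $\Re s=\tfrac12$, and the gamma ratio $\Gamma((s+a)/2)/\Gamma((1-s+a)/2)$ has modulus one there because $\overline{\Gamma(z)}=\Gamma(\bar z)$ makes numerator and denominator complex conjugates of each other. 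Thus
$$\bigl|\mathcal{M}[f_{r,\chi_1}](\tfrac12+it)\bigr|^2=\bigl|\mathcal{M}[w](\tfrac12-it)\bigr|^2=\bigl|\mathcal{M}[w](\tfrac12+it)\bigr|^2,$$
where the last equality uses that $w$ is real.

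Finally, a second application of Plancherel, this time to $w$ itself, gives
$$\frac{1}{2\pi}\int_{-\infty}^\infty\bigl|\mathcal{M}[w](\tfrac12+it)\bigr|^2\,dt=\int_0^\infty w(y)^2\,dy=1,$$
by the normalization \eqref{eq:weightwNormalization}, and chaining the identities yields the claim. The only real obstacle is bookkeeping: one must check from the explicit form of $f_{r,\chi_1}$ in \eqref{eq:128} that the ``extra'' factors multiplying $\gamma^k(s,\chi_1)\mathcal{M}[w](1-s)$ really are unimodular on the critical line, so that nothing beyond $|\mathcal{M}[w]|^2$ survives in the integrand. Once that is verified, the double Plancherel argument runs itself.
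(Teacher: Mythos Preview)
Your approach is essentially the paper's: apply Parseval for Mellin transforms twice and observe that the gamma ratio and the remaining factors collapse, leaving only $\int w^2=1$. The paper runs the first Parseval on the line $\sigma=2$ and obtains an \emph{exact algebraic} cancellation, whereas you run Plancherel on $\sigma=\tfrac12$ and argue unimodularity; these are the same computation viewed from two vertical lines.

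One correction to your bookkeeping. From \eqref{eq:128} the Mellin transform of $f_{r,\chi_1}$ is
\[
\mathcal{M}[f_{r,\chi_1}](s)=\mathcal{M}[w](1-s)\,\frac{\Gamma\!\left(\frac{s+a}{2}\right)^k}{\Gamma\!\left(\frac{1-s+a}{2}\right)^k}\,\prod_{p\mid r}\left(\frac{1-\chi_1(p)p^{-(1-s)}}{1-\overline{\chi_1}(p)p^{-s}}\right)^{k},
\]
so there is no $(r/\pi)^{s-1/2}$ or $X^{s-1}$ bookkeeping here; the ``extra'' factor you must actually check is the Euler product over $p\mid r$. On $s=\tfrac12+it$ each factor has numerator $1-\chi_1(p)p^{-1/2+it}$ and denominator $1-\overline{\chi_1}(p)p^{-1/2-it}=\overline{1-\chi_1(p)p^{-1/2+it}}$, hence is of the form $z/\bar z$ and has modulus one. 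With this observation (together with your correct remark that the gamma ratio is unimodular on the critical line), your double-Plancherel argument goes through verbatim.
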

\begin{proof}
	By definition of the Mellin transform and Parseval formula for the Mellin transform (\cite[Theorem 1.17, pg. 33]{Yakubovich1996}), we have
	\begin{align}
		\mathcal{M}[|f_{r, \chi_1}|^2](1)
		&= \int_0^\infty
		f_{r, \chi_1}(y,\chi_1)
		\overline{f_{r, \chi_1}(y,\chi_1)}
		dy
		\\&=
		\frac{1}{2\pi i}
		\int_{(2)}
		\mathcal{M}[f_{r, \chi_1}](s)
		\mathcal{M}[\overline{f_{r, \chi_1}}](1-s)
		ds.
	\end{align}
	By definition of the Mellin transform again and by \eqref{eq:f_r}, the above is equal to
	\begin{align}
		\frac{1}{2\pi i}
		&\int_{(2)}
		\mathcal{M}[w](1-s)
		\frac{\Gamma^k\left(\frac{s+a}{2}\right)}{\Gamma^k\left(\frac{1-s+a}{2}\right)}
		\prod_{p \mid r}
		\left(
		\frac{1-\frac{\chi_1(p)}{p^{1-s}}}{1-\frac{\overline{\chi_1}(p)}{p^s}}
		\right)^k
		\\& \qquad \times
		\mathcal{M}[w](s)
		\frac{\Gamma^k\left(\frac{1-s+a}{2}\right)}{\Gamma^k\left(\frac{s+a}{2}\right)}
		\prod_{p \mid r}
		\left(
		\frac{1-\frac{\overline{\chi_1}(p)}{p^{s}}}{1-\frac{{\chi_1}(p)}{p^{1-s}}}
		\right)^k
		ds
		\\&=
		\frac{1}{2 \pi i}
		\int_{(2)}
		\mathcal{M}[w](1-s)
		\mathcal{M}[w](s) ds
		= 
		\int_0^\infty |w(x)|^2 dx,
	\end{align}
	where the last equality follows by Parseval formula for Mellin transforms once more time. By \eqref{eq:weightwNormalization}, this gives the right side of \eqref{eq:f_r(1)=1}.
\end{proof}

\section{Proof of Theorem \ref{theorem:1} \nameref{theorem:1}}
\label{sec:ProofOfTheorem}

\subsection{Step 1: Applying the functional equation and reduction in $q$} 
\label{subsection:step1}

We first rewrite the variance \eqref{eq:thm1} as a sum over primitive characters, in order to apply the functional equation. By Lemmas \ref{lem:non-principalCharacters} and \ref{lem:primitiveCharacters}, we have
\begin{align}
	\sum_{\substack{1\le a\le d\\ (a,d)=1}}
	\left|
	\Delta_w(\tau_k;X,d,a)
	\right|^2
	=
	\frac{1}{\varphi(d)}
	\sum_{\substack{d=qr\\ q>1}}\
	\displaystyle\sideset{}{^*}\sum_{\chi_1 (\bmod q)}
	\left| \sum_{(n,r)=1} \tau_k(n) \chi_1(n) w\left(\frac{n}{X}\right) \right|^2.
\end{align}
Substituting
\begin{equation}
	w\left(\frac{n}{X}\right)
	= \frac{1}{2 \pi i} \int_{(\sigma)}
	\mathcal{M}[w](s) \left(\frac{n}{X}\right)^{-s} ds,\ (\sigma >1),
\end{equation}
into the above and interchanging the order of summation and integration, the left side of \eqref{eq:thm1} can be written as
\begin{equation} \label{eq:1049}
	\frac{1}{\varphi(d)}
	\sum_{\substack{d=qr\\ q>1}}\
	\displaystyle\sideset{}{^*}\sum_{\chi_1 (\bmod q)}
	\left| \frac{1}{2\pi i} 
	\int_{(\sigma)}
	\mathcal{M}[w](s) X^s
	\sum_{(n,r)=1} \frac{\tau_k(n) \chi_1(n)}{n^s} ds \right|^2,\
	(\sigma >1).
\end{equation}
Since $q>1$, $\chi_1$ modulo $q$ is non-principal, thus, the function $\displaystyle\sum_{(n,r)=1} \frac{\tau_k(n) \chi_1(n)}{n^{s}} = L(s, \chi_1 \chi_{0,r})^k$ has no poles. Moreover, since zero is not in the  support of $w(y)$, the Mellin transform $\mathcal{M}[w](s)$ also has no poles. Hence, the integrand in \eqref{eq:1049} is entire and we may shift the line of integration to the left of the one-line. 

Shifting the line of integration to $\sigma=-\epsilon$, applying the functional equation \eqref{eq:FE}, and making a change of variables $s$ to $1-s$, \eqref{eq:1049} is equal to
\begin{equation} \label{eq:433}
	\frac{1}{\varphi(d)}
	\sum_{\substack{d=qr\\ q>1}}\
	\displaystyle\sideset{}{^*}\sum_{\chi_1 (\bmod q)}
	\left|
	\frac{1}{2\pi i} \int_{(1+\epsilon)}
	\mathcal{M}[w](1-s) X^{1-s} \gamma^k(s,\chi_1)
	L^k(s,\overline{\chi}_1)
	\prod_{p \mid r}
	\left(1-\frac{\chi_1(p)}{p^{1-s}}\right)^k ds
	\right|^2.
\end{equation}
Let
\begin{equation} \label{eq:Q}
	Q := d^{\frac{3c+2}{3k+2}}.
\end{equation}
This choice of $Q$ comes from equating the error terms from \eqref{eq:102} with \eqref{eq:245} below. We will now show that contributions from characters with modulus $q < Q$ are negligible. 

Suppose 
\begin{equation} \label{eq:q<Q}
	q < Q.
\end{equation}
By repeated integration by parts and rapid decay of derivatives of the smooth weight $w(y)$, we have
\begin{equation} \label{eq:what}
	\mathcal{M}[w](1-s) \ll \frac{1}{(1 + |t|)^A}
\end{equation}
for any $A>0$. Also, for $s=1+\epsilon + it$, we have, by \eqref{eq:q<Q}, the bounds
\begin{equation}
	\gamma^k(s,\chi_1)
	\ll q^{k\left(\frac{1}{2} + \epsilon\right)}
	< Q^{k\left(\frac{1}{2} + \epsilon\right)},
\end{equation}
\begin{equation}
	L^k(s,\overline{\chi}_1)
	\ll 1,
\end{equation}
\begin{equation}
	\prod_{p \mid r}
	\left(1-\frac{\chi_1(p)}{p^{1-s}}\right)^k
	\ll r^\epsilon.
\end{equation}
Thus, by the above estimates, the integral in \eqref{eq:433} is
\begin{equation}
	\ll Q^{\frac{k}{2} + \epsilon}.
\end{equation}
Hence, by the above,
\begin{align} \label{eq:400}
	&\frac{1}{\varphi(d)}
	\sum_{\substack{d=qr\\ 1< q < Q}}\
	\displaystyle\sideset{}{^*}\sum_{\chi_1 (\bmod q)}
	\left|
	\frac{1}{2\pi i} \int_{(1+\epsilon)}
	\mathcal{M}[w](1-s) X^{1-s} \gamma^k(s,\chi_1)
	\right.
	\\& \left. \quad \times
	L^k(s,\overline{\chi}_1)
	\prod_{p \mid r}
	\left(1-\frac{\chi_1(p)}{p^{1-s}}\right)^k ds
	\right|^2
	\ll
	Q^{k+\epsilon}
	\frac{1}{\varphi(d)}
	\sum_{\substack{d=qr\\ 1 < q < Q}}
	\varphi^*(q).
\end{align}
By \eqref{eq:q<Q}, \eqref{eq:tauBound}, the crude bounds $\varphi^*(q) \ll q$ and
\begin{equation} \label{eq:varphi}
	\frac{1}{\varphi(d)} \ll d^{-1+\epsilon},
\end{equation}
we have
\begin{equation}
	\frac{1}{\varphi(d)}
	\sum_{\substack{d=qr\\ 1 < q < Q}}
	\varphi^*(q)
	\ll d^{-1+\epsilon} Q.
\end{equation}
With this, \eqref{eq:400} is bounded by
\begin{equation} \label{eq:102}
	\ll Q^{k+1} d^{-1 + \epsilon}
	\ll X^{1+\epsilon} \frac{Q^{k+1}}{Xd}.
\end{equation}
By the choice \eqref{eq:Q}, the above is
\begin{equation}
	\ll X^{1+\epsilon} d^{\frac{c-k}{3k+2}}
	\ll X^{1+\epsilon} d^{-\frac{\delta}{3k+2}}
\end{equation}
uniformly for all $c$ in \eqref{eq:c}. It thus suffices to assume $q \ge Q$. 

\subsection{Step 2: Reduction to the dual sum}
\label{subsection:step2} 
Suppose $q \ge Q$. We now introduce the parameter $N$ and truncate the $n$ sum in $L^k(s,\overline{\chi_1})$. Terms with $n\le N$ account for the main term, and terms with $n>N$ will be shown in this step to be small. By Euler products, we have
\begin{equation}
	\sum_{\substack{(n,r)=1}}
	\frac{\tau_k(n) \overline{\chi_1}(n)}{n^s}
	=
	\prod_{p\mid r} \left(1-\frac{\overline{\chi}_1(p)}{p^s}\right)^{k}
	L^k(s,\overline{\chi_1}),\
	(\sigma >1).
\end{equation}
Thus, writing
\begin{equation} \label{eq:1202}
	L^k(s,\overline{\chi}_1)
	= 
	\prod_{p\mid r} \left(1-\frac{\overline{\chi}_1(p)}{p^s}\right)^{-k}
	\left( \sum_{\substack{n\le N\\ (n,r)=1}} \frac{\tau_k(n) \overline{\chi}_1(n)}{n^s}
	+ \sum_{\substack{n > N\\ (n,r)=1}} \frac{\tau_k(n) \overline{\chi}_1(n)}{n^s} \right),
\end{equation}
for $L^k(s,\overline{\chi}_1)$ in \eqref{eq:433}, with $N$ to be chosen below, we have, by \eqref{eq:1202}, \eqref{eq:433}, and \eqref{eq:1049}, 
\begin{align} \label{eq:157}
	&\sum_{\substack{1\le a\le d\\ (a,d)=1}}
	\left|
	\Delta_w(\tau_k;X,d,a)
	\right|^2
	\\& = 
	\frac{1}{\varphi(d)}
	\sum_{\substack{d=qr\\ q \ge Q}}\
	\displaystyle\sideset{}{^*}\sum_{\chi_1 (\bmod q)}
	\left|
	\frac{1}{2\pi i} \int_{(1+B)}
	\left(
	\sum_{\substack{n\le N\\ (n,r)=1}}
	+ 
	\sum_{\substack{n > N\\ (n,r)=1}}
	\right)
	\frac{\tau_k(n) \overline{\chi}_1(n)}{n^s}
	\right.
	\\& \left. \qquad
	\times \mathcal{M}[w](1-s) X^{1-s} \gamma^k(s,\chi_1)
	\prod_{p \mid r}
	\left(\frac{1-\frac{\chi_1(p)}{p^{1-s}}}{1-\frac{\overline{\chi}_1(p)}{p^s}}\right)^k ds
	\right|^2
	+ O \left(
	X^{1+\epsilon} d^{-\frac{\delta}{3k+2}}
	\right),
\end{align}
where we have moved the line of integration to $\sigma = 1 +B$, $B>0$. Analogously to \textbf{Step 1}, we will show contributions from terms with $n>N$ in the above are negligible. The main difference in this step is that there is an additional saving from the sum $\sum_{n>N} n^{-\sigma + \epsilon}$. We have, for $s=1+B+ it$,
\begin{equation}
	\sum_{\substack{n>N\\ (n,r)=1}}
	\frac{\tau_k(n) \overline{\chi_1}(n)}{n^s}
	\ll N^{-B + \epsilon},
\end{equation}
\begin{equation}
	X^{1-s} \ll X^{-B},
\end{equation}
\begin{equation}
	\gamma^k(s,\chi_1)
	\ll q^{k\left(B + \frac{1}{2} \right)},
\end{equation}
\begin{equation}
	L^k(s,\overline{\chi}_1)
	\ll 1,
\end{equation}
\begin{equation}
	\prod_{p \mid r}
	\left(1-\frac{\chi_1(p)}{p^{1-s}}\right)^k
	\left(1-\frac{\overline{\chi}_1(p)}{p^s}\right)^{-k}
	\ll r^{kB + \epsilon}.
\end{equation}
Thus, by the above estimates, the contribution from terms with $n>N$ in the integral on the right side of \eqref{eq:157} is
\begin{equation} \label{eq:209}
	\ll N^{-B + \epsilon}
	X^{-B}
	q^{k\left(B+\frac{1}{2}\right)} r^{kB+ \epsilon}.
\end{equation}
We choose
\begin{equation} \label{eq:N}
	N = d^{k-c+\frac{\delta}{2}}.
\end{equation}
and
\begin{equation} \label{eq:B}
	B = \frac{A+k}{\delta}.
\end{equation} 
With the choices \eqref{eq:N} and \eqref{eq:B}, the estimate \eqref{eq:209} is
\begin{equation}
	\ll d^{- \frac{A}{2} + \epsilon},
\end{equation}
for any $A>0$. This bound shows that the contribution from terms with $n>N$ on the right side of \eqref{eq:157} can be absorbed into the big-oh term.

We are now reduced to terms with $q \ge Q$ and $n\le N$. To summarize, we now have
\begin{align} \label{eq:814}
	\sum_{\substack{1\le a\le d\\ (a,d)=1}}
	&\left|
	\Delta_w(\tau_k;X,d,a)
	\right|^2
	\\&=
	\frac{1}{\varphi(d)}
	\sum_{\substack{d=qr\\ q \ge Q}}\
	\displaystyle\sideset{}{^*}\sum_{\chi_1 (\bmod q)}
	\left|
	\sum_{\substack{n\le N\\ (n,r)=1}}
	\tau_k(n) \overline{\chi}_1(n)
	\frac{1}{2\pi i} 
	\int_{(1+B)}
	\mathcal{M}[w](1-s) \frac{X}{(nX)^s}
	\right.
	\\& \left. \qquad \times
	\gamma^k(s,\chi_1)
	\prod_{p \mid r}
	\left(\frac{1-\frac{\chi_1(p)}{p^{1-s}}}{1-\frac{\overline{\chi}_1(p)}{p^s}}\right)^k ds
	\right|^2
	+ O\left(
	X^{1+\epsilon} d^{-\frac{\delta}{3k+2}}
	\right).
\end{align}
For the evaluation of the main term, we will move the line of integration to the critical line $s = 1/2 + it$. 

\subsection{Step 3: Off-diagonal analysis} 
\label{subsection:step3}
Let $s=1/2 + it$. We expand out the square on the right side of \eqref{eq:814} and first treat the off-diagonal terms. Let
\begin{equation} \label{eq:Y}
	XY = d^k.
\end{equation}
We write
\begin{equation}
	\frac{X}{(nX)^s} \gamma^k(s,\chi_1)
	= \varepsilon^k \frac{X^{1/2} r^{k/2} \pi^{k/2}}{Y^{1/2}}
	\left(\frac{n r^k \pi^k}{Y}\right)^{-s}
	\frac{\Gamma\left(\frac{s+a}{2}\right)^k}{\Gamma\left(\frac{1-s+a}{2}\right)^k},
\end{equation}
and
\begin{align}
	\prod_{p \mid r}
	\left(\frac{1-\frac{\chi_1(p)}{p^{1-s}}}{1-\frac{\overline{\chi}_1(p)}{p^s}}\right)^k
	&=
	\sum_{i_1, \cdots, i_\ell =0}^k\
	\sum_{j_1, \cdots, j_\ell =0}^\infty
	\binom{k}{i_1} \cdots \binom{k}{i_\ell}
	\binom{-k}{j_1} \cdots \binom{-k}{j_\ell}
	\\& \quad \times 
	(-1)^{i_1 + \cdots + i_\ell}
	\frac{\chi_1(p_1^{i_1} \cdots p_\ell^{i_\ell}) \overline{\chi}_1(p_1^{j_1} \cdots p_\ell^{j_\ell})}{p_1^{i_1 (1-s)} \cdots p_\ell^{i_\ell (1-s)} p_1^{j_1 s} \cdots p_\ell^{j_\ell s}},
\end{align}
where
\begin{equation} \label{eq:1204}
	r = p_1^{\alpha_1} p_2^{\alpha_2} \cdots p_\ell^{\alpha_\ell}
\end{equation}
is the prime factorization of $r$.
Denote
\begin{align}
	f_r(y)
	&= 
	\sum_{i_1, \cdots, i_\ell =0}^k\
	\sum_{j_1, \cdots, j_\ell =0}^\infty
	\binom{k}{i_1} \cdots \binom{k}{i_\ell}
	\binom{-k}{j_1} \cdots \binom{-k}{j_\ell}
	(-1)^{i_1 + \cdots + i_\ell}
	\\& \qquad \times
	\frac{1}{2\pi i}
	\int_{(\frac{1}{2})}
	\mathcal{M}[w](1-s) y^{-s}
	\frac{\Gamma\left(\frac{s+a}{2}\right)^k}{\Gamma\left(\frac{1-s+a}{2}\right)^k}
	\frac{p_1^{i_1 (s-1)} \cdots p_\ell^{i_\ell (s-1)}}{p_1^{j_1 s} \cdots p_\ell^{j_\ell s}}
	ds.
\end{align}
We write the off-diagonal terms $n\neq m$ on the right side of \eqref{eq:814} as
\begin{align} \label{eq:849}
	\quad \quad \quad
	\frac{X}{Y}
	&\frac{1}{\varphi(d)}
	\sum_{\substack{d=qr\\ q \ge Q}}\
	(\pi r)^k
	\sum_{\substack{n \le N\\ (n,r) = 1}}
	\tau_k(n)
	f_r\left(\frac{n r^k \pi^k}{Y}\right)
	\sum_{\substack{m\le N\\ (m,r)=1\\ m\neq n}}
	\tau_k(m)
	\overline{f_r\left(\frac{m r^k \pi^k}{Y} \right)}
	\\& \times
	\displaystyle\sideset{}{^*}\sum_{\chi_1 (\bmod q)}	
	\overline{\chi}_1(n p_1^{j_1} \cdots p_\ell^{j_\ell})
	\chi_1(m p_1^{i_1} \cdots p_\ell^{i_\ell}).
\end{align}
Since $\chi_1$ and $\overline{\chi}_1$ are characters modulo $q$, we have $\chi_1(m p_1^{i_1} \cdots p_\ell^{i_\ell}) = \overline{\chi}_1(n p_1^{j_1} \cdots p_\ell^{j_\ell}) = 0$, unless
\begin{equation} \label{eq:1122}
	(n p_1^{j_1} \cdots p_\ell^{j_\ell},q) = (m p_1^{i_1} \cdots p_\ell^{i_\ell}, q) = 1.
\end{equation}
Also, note that, since $n\neq m$, $p_i \mid r$ and $(nm,r)=1$,
\begin{equation} \label{eq:1122a}
	n p_1^{j_1} \cdots p_\ell^{j_\ell} 
	\neq
	m p_1^{i_1} \cdots p_\ell^{i_\ell}.
\end{equation}
Thus, by \eqref{eq:1122}, \eqref{eq:1122a}, and \eqref{eq:orthogonality}, we have
\begin{equation}
	\displaystyle\sideset{}{^*}\sum_{\chi_1 (\bmod q)}	
	\overline{\chi}_1(n p_1^{j_1} \cdots p_\ell^{j_\ell})
	\chi_1(m p_1^{i_1} \cdots p_\ell^{i_\ell})
	= \sum_{\mathfrak{d} \mid (q, n p_1^{j_1} \cdots p_\ell^{j_\ell} - m p_1^{i_1} \cdots p_\ell^{i_\ell} ) }
	\varphi(\mathfrak{d}) \mu \left(\frac{q}{\mathfrak{d}}\right).
\end{equation}
Let
\begin{equation}
	b_r \equiv p_1^{j_1} \cdots p_\ell^{j_\ell} \overline{p_1^{i_1} \cdots p_\ell^{i_\ell}} (\bmod \mathfrak{d}),
\end{equation}
where the bar denotes inverse modulo $\mathfrak{d}$. Note that, by \eqref{eq:1122a}, 
\begin{equation} \label{eq:1122b}
	m \neq n b_r.
\end{equation}
Writing the condition $\mathfrak{d} \mid n p_1^{j_1} \cdots p_\ell^{j_\ell} - m p_1^{i_1} \cdots p_\ell^{i_\ell}$ as $m \equiv  n b_r (\bmod\ \mathfrak{d})$, \eqref{eq:849} can be rewritten as
\begin{align} \label{eq:900}
	\frac{X}{Y}
	&\frac{1}{\varphi(d)}
	\sum_{\substack{d=qr\\ q \ge Q}}\
	\pi^k r^k
	\sum_{\mathfrak{d} \mid q} 
	\varphi(\mathfrak{d}) \mu\left(\frac{q}{\mathfrak{d}} \right)
	\sum_{\substack{n \le N\\ (n,r) = 1}}
	\tau_k(n)
	f_r\left(\frac{n r^k \pi^k}{Y}\right)
	\\ & \times
	\sum_{\substack{m\le N\\ 
			(m,r)=1\\ 
			m\neq n\\ 
			m \equiv  n b_r (\bmod \mathfrak{d}) }}
	\tau_k(m)
	\overline{f_r\left(\frac{m r^k \pi^k}{Y} \right)}.
\end{align}
If $\mathfrak{d} > N$, then the $m$ sum in the above is empty, by \eqref{eq:1122b}. If $\mathfrak{d} \le N$, then the $m$ sum in \eqref{eq:900} is bounded by
\begin{align}
	r^\epsilon Y^\epsilon N^\epsilon
	\sum_{\substack{m\le N\\ m \equiv nb_r (\bmod \mathfrak{d})}}
	\frac{Y^{1/2}}{m^{1/2} r^{k/2}}
	\ll (rYN)^\epsilon 
	\frac{Y^{1/2}}{r^{k/2}} \frac{N^{1/2}}{\mathfrak{d}}.
\end{align}
With this estimate, contribution from $\mathfrak{d} \le N$ to \eqref{eq:900} is bounded by
\begin{align} \label{eq:off-diag}
	\frac{X^{1+\epsilon}}{Y}
	\frac{1}{\varphi(d)}
	\sum_{\substack{d=qr\\ q \ge Q}}\
	r^k
	\sum_{ \mathfrak{d} \mid q} 
	\frac{\varphi(\mathfrak{d})}{\mathfrak{d}}
	\frac{YN}{r^k}
	\ll X^{1+\epsilon} \frac{N}{\varphi(d)}
	\ll X^{1+\epsilon} d^{-\frac{\delta}{2}},
\end{align}
by \eqref{eq:N} and \eqref{eq:varphi}. This estimate for the off-diagonal terms on the right side of \eqref{eq:814} with $\mathfrak{d} \le N$ is acceptable for \eqref{eq:thm1}. 

We last treat the diagonal terms in \eqref{eq:814}.

\subsection{Step 4: Diagonal analysis} 
\label{subsection:step4}
For $\sigma>1$, let
\begin{equation} \label{eq:f_r}
	f_{r, \chi_1}(y)
	= \frac{1}{2 \pi i} \int_{(\sigma)}
	\mathcal{M}[w](1-s) y^{-s}
	\frac{\Gamma\left(\frac{s+a}{2}\right)^k}{\Gamma\left(\frac{1-s+a}{2}\right)^k}
	\prod_{p \mid r}
	\left(\frac{1-\frac{\chi_1(p)}{p^{1-s}}}{1-\frac{\overline{\chi}_1(p)}{p^s}}\right)^k
	ds.
\end{equation}
This function can also be written as an inverse Mellin transform as
\begin{equation} \label{eq:128}
	f_{r, \chi_1}(y)
	= \mathcal{M}^{-1}
	\left[
	s \mapsto
	\mathcal{M}[w](1-s)
	\frac{\Gamma\left(\frac{s+a}{2}\right)^k}{\Gamma\left(\frac{1-s+a}{2}\right)^k}
	\prod_{p \mid r}
	\left(\frac{1-\frac{\chi_1(p)}{p^{1-s}}}{1-\frac{\overline{\chi}_1(p)}{p^s}}\right)^k
	\right](y).
\end{equation}
We thus have, by \eqref{eq:814}, \eqref{eq:off-diag}, and \eqref{eq:f_r},
\begin{align} \label{eq:star2}
	\sum_{\substack{1\le a\le d\\ (a,d)=1}}
	&\left|
	\Delta_w(\tau_k;X,d,a)
	\right|^2
	\\&=
	X^2
	\frac{1}{\varphi(d)}
	\sum_{\substack{d=qr\\ q \ge Q}}
	\frac{\pi^k}{q^k}
	\displaystyle\sideset{}{^*}\sum_{\chi_1 (\bmod q)}
	\sum_{\substack{n\le N\\ (n,r)=1}}
	\tau_k(n)^2 \left| f_{r, \chi_1}\left(\frac{n X \pi^k}{q^k}\right)\right|^2
	\\&+ O\left(
	X^{1+\epsilon} d^{-\frac{\delta}{3k+2}}
	\right).
\end{align}
We also have
\begin{equation} \label{eq:920}
	\left|f_{r, \chi_1}\left(\frac{n X \pi^k}{q^k}\right) \right|^2
	\ll \left(\frac{q^k}{nX}\right)^{2B + 2} r^{2kB +\epsilon},
\end{equation}
for any $B>0$. We now write
\begin{align}
	\sum_{\substack{n\le N\\ (n,r)=1}}
	&\tau_k(n)^2 \left| f_{r, \chi_1}\left(\frac{n X \pi^k}{q^k}\right)\right|^2
	\\&= \left(\sum_{\substack{n=1\\ (n,r)=1}}^\infty - \sum_{\substack{n>N\\ (n,r)=1}} \right)
	\tau_k(n)^2 \left| f_{r, \chi_1}\left(\frac{n X \pi^k}{q^k}\right)\right|^2.
\end{align}
For terms $n>N$ in the above, we have, by \eqref{eq:920},
\begin{align}
	X^2
	&\frac{1}{\varphi(d)}
	\sum_{\substack{d=qr\\ q \ge Q}}
	\frac{\pi^k}{q^k}
	\displaystyle\sideset{}{^*}\sum_{\chi_1 (\bmod q)}
	\sum_{\substack{n>N\\ (n,r)=1}}
	\tau_k(n)^2 \left| f_{r, \chi_1}\left(\frac{n X \pi^k}{q^k}\right)\right|^2
	\\& \label{eq:starsquared}
	\ll d^{2kB+k} N^{-(2B+1)} X^{-2B+\epsilon}
	\ll X^{1+\epsilon} d^{-\delta(B+1/2)},
\end{align}
for any $B>0$. This amount is admissible for \eqref{eq:thm1}.

Finally, we have, by inverse Mellin transform of $f_r$,
\begin{align} \label{eq:729}
	\sum_{\substack{n=1\\ (n,r)=1}}^\infty
	\tau_k(n)^2 
	&\left| f_{r, \chi_1}\left(\frac{n X \pi^k}{q^k}\right)\right|^2
	\\&
	= \frac{1}{2\pi i}
	\int_{(2)}
	\mathcal{M}[|f_{r, \chi_1}|^2](s) \left(\frac{q^k}{X \pi^k} \right)^s
	\sum_{\substack{n=1\\ (n,r)=1}}^\infty \frac{\tau_k(n)^2}{n^s} ds,
\end{align}
for $\sigma>1$. Therefore, by \eqref{eq:star2}, \eqref{eq:starsquared}, and the above, we obtain
\begin{align} \label{eq:203}
	&\quad\quad \sum_{\substack{1\le a\le d\\ (a,d)=1}}
	\left|
	\Delta_w(\tau_k;X,d,a)
	\right|^2
	\\&=
	\frac{X}{\varphi(d)}
	\sum_{\substack{d=qr\\ q \ge Q}}
	\displaystyle\sideset{}{^*}\sum_{\chi_1 (\bmod q)}
	\frac{1}{2\pi i}
	\int_{(2)}
	\mathcal{M}[|f_{r, \chi_1}|^2](s) \left(\frac{q^k}{X \pi^k} \right)^{s-1}
	\sum_{\substack{n=1\\ (n,r)=1}}^\infty \frac{\tau_k(n)^2}{n^s} ds
	\\&+ O\left(
	X^{1+\epsilon} d^{-\frac{\delta}{3k+2}}
	\right).
\end{align}
This proves \eqref{eq:thm1}. \qed

\section{Proof of Corollary \ref{cor:2} \nameref{cor:2}}
\label{sec:ProofofCorollary}

\subsection{Step a: Full main term as a residue plus an error term} 
\label{subsection:stepa}
We start with \eqref{eq:203}. The integrand in \eqref{eq:203} has a pole of order $k^2$ at $s=1$. The residue of this pole is equal to
\begin{equation} \label{eq:819}
	\mathcal{M}[|f_{r, \chi_1}|^2](1)
	P_{k^2-1} \left( \log \left( \frac{(q/ \pi)^k}{X} \right), r \right)
	= P_{k^2-1} \left( \log \left( \frac{(q/ \pi)^k}{X} \right), r \right),
\end{equation}
by \eqref{eq:f_r(1)=1}, where
\begin{equation} \label{eq:745}
	P_{k^2-1}(y,r) 
	= \sum_{j=0}^{k^2-1}
	b_{j}(r) y^j
\end{equation}
is an explicit polynomial in $y$ of degree $k^2-1$. The leading coefficient $b_{k^2-1}(r)$ in \eqref{eq:745} is equal to
\begin{equation}
	\frac{a_k(r)}{(k^2-1)!}
	= \frac{1}{(k^2-1)!}
	\lim_{s\to 1^+}
		(s-1)^{k^2}
		\sum_{\substack{n=1\\ (n,r)=1}}^\infty
		\frac{\tau_k(n)^2}{n^s},
\end{equation}
by \eqref{eq:akd1}. Note that 
\begin{equation}
	a_k(1)
	= \lim_{s\to 1^+}
	(s-1)^{k^2}
	\sum_{\substack{n=1}}^\infty
	\frac{\tau_k(n)^2}{n^s}
	= a_k,
\end{equation}
and that the factor $a_k(r)$ is uniformly bounded in $r$, since we can rewrite
\begin{equation}
	\sum_{\substack{n=1\\ (n,r)=1}}^\infty
	\frac{\tau_k(n)^2}{n^s}
	= 
	\sum_{\substack{n=1}}^\infty
	\frac{\tau_k(n)^2}{n^s}
	\prod_{p \mid r}
	\left(
	\sum_{j=0}^\infty \frac{\tau_k(p^j)^2}{p^{js}}
	\right),
\end{equation}
so
\begin{equation} \label{eq:akr}
	a_k(r)
	= a_k 
	\prod_{p \mid r}
	\left(
	\sum_{j=0}^\infty \frac{\tau_k(p^j)^2}{p^{j}}
	\right).
\end{equation}

We thus move the line of integration of the integral in \eqref{eq:203} to $\sigma=1/2$, passing the pole at $s=1$, pick up this residue and obtain
\begin{equation} \label{eq:159}
	P_{k^2-1} \left( \log \left( \frac{(q/ \pi)^k}{X} \right), r \right)
	+ O\left( \left( \frac{q^k}{X} \right)^{-\frac{1}{2}} \right)
\end{equation}
for this integral. Thus, we have, from \eqref{eq:203} and \eqref{eq:159},
\begin{align} \label{eq:thm1c}
	&\sum_{\substack{1\le a\le d\\ (a,d)=1}}
	\left|
	\Delta_w(\tau_k;X,d,a)
	\right|^2
	\\&=
	X
	\frac{1}{\varphi(d)}
	\sum_{\substack{d=qr\\ q \ge Q}}
	\varphi^*(q)
	\left[
	P_{k^2-1} \left( \log \left( \frac{(q/ \pi)^k}{X} \right), r \right)
	+ O\left( \left( \frac{q^k}{X} \right)^{-\frac{1}{2}} \right)
	\right]
	\\&+ O\left(
	X^{1+\epsilon} d^{-\frac{\delta}{3k+2}}
	\right).
\end{align}

\subsection{Step b: The error term and $\gamma_k(c)$} 
\label{subsection:stepb}
The first big-oh term in \eqref{eq:thm1c} contributes to the variance an amount
\begin{equation} \label{eq:245}
	\ll X
	\sum_{\substack{d=qr\\ q \ge Q}}
	\frac{\varphi^*(q)}{\varphi(d)}
	\left(\frac{q^k}{ X}\right)^{-\frac{1}{2}}
	\ll
	X^{1 + \epsilon}
	\frac{X^{\frac{1}{2}}}{Q^{\frac{k}{2}}}.
\end{equation}
By the choice \eqref{eq:Q}, this error term is equal to the second big-oh term in \eqref{eq:thm1c}, and so the two error terms can be absorb into one.

We now write
\begin{equation} \label{eq:141}
	\frac{(q/ \pi)^k}{X}
	= \frac{d^{k-c}}{(\pi r)^k}.
\end{equation}
With this, the leading order term in \eqref{eq:159} is equal to
\begin{align}
	\frac{a_k(r)}{(k^2-1)!} &\log \left( \frac{(q/ \pi)^k}{X} \right)^{k^2-1}
	\\&
	= \frac{a_k(r)}{(k^2-1)!} \left( (k-c)\log d - k \log \pi r \right)^{k^2-1}
	\\&= 
	\frac{a_k(r)}{(k^2-1)!} (k-c)^{k^2-1} (\log d)^{k^2-1}
	\\&\quad - \frac{a_k(r)}{(k^2-1)!} \binom{k^2-1}{1}
	(k-c)^{k^2-2} (\log d)^{k^2-2}
	k \log \pi r
	\\& 
	\quad\quad + \cdots 
	+ \frac{a_k(r)}{(k^2-1)!}
	(-k)^{k^2-1} (\log \pi r)^{k^2-1}
	\\&
	= a_k(r) \gamma_k(c) (\log d)^{k^2-1}
	\\&\quad - \frac{a_k(r)}{(k^2-1)!} \binom{k^2-1}{1}
	(k-c)^{k^2-2} (\log d)^{k^2-2}
	k \log \pi r
	\\&  \label{eq:135}
	\quad\quad + \cdots 
	+ \frac{a_k(r)}{(k^2-1)!}
	(-k)^{k^2-1} (\log \pi r)^{k^2-1},
\end{align}
where the last equality comes from \eqref{eq:gammakcsimple}.

Now that we have the factor $\gamma_k(c)$ showing up in the leading order term, we next evaluate the matching arithmetic constant $a_k(d)$ by showing that
\begin{equation} \label{eq:918}
	\frac{1}{\varphi(d)}
	\sum_{\substack{d=qr}}
	\varphi^*(q)
	a_k(r)
	= a_k(d),
\end{equation}
then estimate contributions from lower order main terms.

\subsection{Step c: Evaluating the matching arithmetic constant $a_k(d)$} 
\label{subsection:stepd}
To help with the analysis in this step, we add back terms with $q<Q$. This adds an amount
\begin{equation}
	X
	\sum_{\substack{d=qr\\ q < Q}}
	\frac{\varphi^*(q)}{\varphi(d)}
	a_k(r)
	\ll
	X Q d^{1-\epsilon}
	\ll X^{1-\frac{3 \delta}{3k+2} + \epsilon}.
\end{equation}
which is acceptable for the asymptotic. We rewrite the left side of \eqref{eq:918} as
\begin{equation} \label{eq:12:05}
	\frac{1}{\varphi(d)}
	(\mu\star \varphi \star a_k)(d).
\end{equation}
We first compute the Dirichlet series for $a_k(r)/a_k$. By \eqref{eq:akr}, the coefficients
\begin{equation}
	\frac{a_k(r)}{a_k}
	=
	\prod_{p \mid r}
	\left(
	\sum_{j=0}^\infty \frac{\tau_k(p^j)^2}{p^{j}}
	\right)
\end{equation}
are multiplicative in $r$. Thus, by Euler products, we have, by \eqref{eq:magic},
	\begin{align} 
		\sum_{r=1}^\infty
		\frac{a_k(r)/ a_k}{r^{s}}
		&
		= \prod_p 
		\sum_{j=0}^\infty
		\left(
		\sum_{i=0}^\infty
		\frac{\tau_k(p^i)^2}{p^{i}}\right)
		p^{-js}
		\\&
		=
		\prod_p \frac{(1-\frac{1}{p})^{2k-1}}{ \sum_{i=0}^{k-1} \binom{k-1}{i}^2 p^{-i}}
		\sum_{j=0}^\infty p^{-js}
		\\& \label{eq:1154a}
		=
		\zeta(s)
		\prod_p \frac{(1-\frac{1}{p})^{2k-1}}{ \sum_{i=0}^{k-1} \binom{k-1}{i}^2 p^{-i}}.
	\end{align}
	Next, we have the well known series
	\begin{equation} \label{eq:1154b}
		\sum_{n=1}^\infty \frac{\mu(n)}{n^s}
		= \frac{1}{\zeta(s)}
	\end{equation}
	and Euler product
	\begin{equation} \label{eq:1154c}
		\sum_{n=1}^\infty \frac{\varphi(n)}{n^s}
		= \prod_p
		\sum_{j=0}^\infty
		\frac{\varphi(p^j)}{p^{js}}.
	\end{equation}
Hence, by \eqref{eq:1154a}, \eqref{eq:1154b}, and \eqref{eq:1154c}, we have
\begin{align}
	\frac{1}{a_k}
	\sum_{d=1}^\infty
	\frac{(\mu\star \varphi \star a_k)(d)}{d^s}
	&= \frac{1}{\zeta(s)}
	\left(\prod_p
	\sum_{j=0}^\infty
	\frac{\varphi(p^j)}{p^{js}} \right)
	\left(
	\zeta(s)
	\prod_p \frac{(1-\frac{1}{p})^{2k-1}}{ \sum_{i=0}^{k-1} \binom{k-1}{i}^2 p^{-i}}
	\right)
	\\& =
	\prod_p
	\sum_{j=0}^\infty \frac{\varphi(p^j)(1-\frac{1}{p})^{2k-1}}{\sum_{i=0}^{k-1} \binom{k-1}{i}^2 p^{-i}}
	\frac{1}{p^{js}}.
\end{align}
Thus, at prime powers,
\begin{equation} \label{eq:315}
	\frac{1}{a_k}
	(\mu\star \varphi \star a_k)(p^j)
	= \frac{\varphi(p^j)(1-\frac{1}{p})^{2k-1}}{\sum_{i=0}^{k-1} \binom{k-1}{i}^2 p^{-i}}.
\end{equation}
Hence, if $d$ is factored as
\begin{equation} \label{eq:dPrimeFactor}
	d= p_1^{\alpha_1} \cdots
	p_v^{\alpha_v},
\end{equation}
with $p_j$ distinct, then \eqref{eq:315} implies that
\begin{align} \label{eq:1206}
	\frac{1}{a_k}
	(\mu\star \varphi \star a_k)(d)
	&= 
	\frac{1}{a_k}(\mu\star \varphi \star a_k)(p_1^{\alpha_1}) \cdots  
	\frac{1}{a_k}(\mu\star \varphi \star a_k)(p_v^{\alpha_v})
	\\&= \varphi(d) 
	\prod_{p \mid d}
	\frac{(1-\frac{1}{p})^{2k-1}}{\sum_{i=0}^{k-1} \binom{k-1}{i}^2 p^{-i}}
	\\&= \varphi(d) \frac{a_k(d)}{a_k}.
\end{align}
Thus, by the above, we have that
\begin{equation}
	(\mu\star \varphi \star a_k)(d)
	= \varphi(d) a_k(d).
\end{equation}
This proves \eqref{eq:918}. We last show
\begin{equation} \label{eq:829}
	\sum_{\substack{d=qr}}
	\frac{\varphi^*(q)}{\varphi(d)}
	a_k(r) \log r
	= o(\log d).
\end{equation}
With this, together with \eqref{eq:thm1c}, \eqref{eq:819}, \eqref{eq:141}, \eqref{eq:135}, \eqref{eq:918}, and induction, we will have, as $d\to \infty$,
\begin{align} \label{eq:836b}
	\sum_{\substack{1\le a\le d\\ (a,d)=1}}
	&\left|
	\Delta_w(\tau_k;X,d,a)
	\right|^2
	\\&
	\sim
	X
	\frac{1}{\varphi(d)}
	\sum_{\substack{d=qr}}
	\varphi^*(q)
	P_{k^2-1} \left( \log \left( \frac{(q/ \pi)^k}{X} \right), r \right)
	\\&
	\sim
	a_k(d) \gamma_k(c) X (\log d)^{k^2-1}.
\end{align}
We now prove \eqref{eq:829}. 

\subsection{Step d: Bounding lower order main terms} 
\label{subsection:stepc}
Similar to the previous step, contributions from $q<Q$ to \eqref{eq:829} 
\begin{equation} \label{eq:829b}
	\sum_{\substack{d=qr\\ q<Q}}
	\frac{\varphi^*(q)}{\varphi(d)}
	a_k(r) \log r
	\ll Q d^{-1+\epsilon}
	\ll d^{-\frac{3\delta}{3k+2} + \epsilon}
\end{equation}
is negligible. 

We have, for $\Re(s) >1$,
\begin{align} \label{eq:212}
	\frac{1}{a_k}
	\sum_{r=1}^\infty
	\frac{a_k(r) \log r}{r^s}
	= - \left( \sum_{r=1}^\infty \frac{a_k(r)/a_k}{r^s} \right)^\prime
	= 
	- C_k
	\zeta^\prime(s).
\end{align}
by \eqref{eq:1154a}, where $C_k=\prod_p \frac{(1-\frac{1}{p})^{2k-1}}{ \sum_{i=0}^{k-1} \binom{k-1}{i}^2 p^{-i}}$. We also have the well-known series
\begin{equation} \label{eq:von}
	- \frac{\zeta^\prime}{\zeta}(s)
	= \sum_{n=1}^\infty \frac{\Lambda(n)}{n^s},\ (\Re(s) > 1).
\end{equation}
Writing
\begin{equation}
	\varphi^* = \mu \star \varphi,
\end{equation}
we have, by \eqref{eq:212},
\begin{equation}
	\frac{1}{a_k}
	\sum_{d=1}^\infty
	\frac{(\mu \star \varphi \star a_k \log)(d)}{d^s}
	=
	C_k
	\sum_{n=1}^\infty \frac{\varphi(n)}{n^s}
	\left(-
	\frac{\zeta^\prime (s)}{\zeta(s)}\right).
\end{equation}
Thus, by \eqref{eq:von} and by equating coefficients from the above, we have that
\begin{equation} \label{eq:841}
	\sum_{\substack{d=qr}}
	\varphi^*(q)
	a_k(r) \log r
	= a_k C_k \sum_{d=qr} \varphi(q) \Lambda(r).
\end{equation}
The $r$ sum on the right of \eqref{eq:841} is restricted to prime powers of the form
\begin{equation} \label{eq:1154}
	r = p_j^{\beta_j},\
	1\le j\le v,\
	1\le \beta_j \le \alpha_j.
\end{equation}
For each $r$ in \eqref{eq:1154}, we have
\begin{equation}
	\varphi(q) \le q = \frac{d}{r}
	\le \frac{d}{p_j},
\end{equation}
since $\beta_j \ge 1$. Thus, summing over primes dividing $d$, we have, by the above,
\begin{equation} \label{eq:1223a}
	\sum_{d=qr} \varphi(q) \Lambda(r)
	\ll 
	d \sum_{p \mid d} \frac{\log p}{p}
\end{equation}
Introducing
\begin{equation} \label{eq:P}
	P = (\log d)^2,
\end{equation}
the above is bounded by
\begin{align}
	d \left( \sum_{\substack{p\mid d \\ p \le P}} \frac{\log p}{p} 
	+ \sum_{\substack{p\mid d \\ p > P}} \frac{\log p}{p}  \right)
	\ll d \left( \log P \sum_{\substack{p\mid d \\ p \le P}} \frac{1}{p} 
	+ \frac{1}{P} \sum_{\substack{p\mid d \\ p >P}} \log p \right)
	\ll d (\log P)^2
\end{align}
Thus, by the above, \eqref{eq:1223a}, \eqref{eq:841}, and the bound $\varphi(d) \gg d/ \log \log d$, the left side of \eqref{eq:829} is bounded by
\begin{align}
	(\log \log d)^3 = o(\log d).
\end{align}
This proves \eqref{eq:829}, and, therefore, the asymptotic \eqref{eq:cor:2}, as $d$ goes to infinity. \qed

\bigskip
\noindent
\emph{Acknowledgments}.
The author is indebted to Bradley Rodgers for suggesting this problem and for extensive discussions around this topic. He also benefited from the many discussions with J. Brian Conrey and is grateful for all his support and generous accommodation. Special thanks to Zeev Rudnick for suggesting to formulate Conjecture \ref{conj:1} and Stephen Lester for useful comments to an earlier draft. The author is also grateful for the referee for through-all reading, pointing out a flaw in the original choice of the \hyperlink{parameter:N}{parameter $N$} and a notational issue, and for making countlessly helpful suggestions. This research is supported in part by the National Science Foundation (NSF) grant Focus Research Group DMS-1854398, the American Institute of Mathematics, and Queen's University.

\end{document}